\renewcommand{\p@enumii}{}
\newcommand{\titel}{Find Subtrees of Specified Weight and Cycles of Specified Length in Linear Time}
\definecolor{hellblau}{rgb}{0.2,0.4,1} 
\definecolor{dunkelblau}{rgb}{0,0,0.8}
\definecolor{dunkelgruen}{rgb}{0,0.5,0}
\theoremstyle{plain}
	\newtheorem{satz}{Satz}[]
	\newtheorem{theorem}[satz]{Theorem}
	\newtheorem{lemma}[satz]{Lemma}
	\newtheorem{observation}[satz]{Observation}
	\newtheorem{corollary}[satz]{Corollary}
\theoremstyle{remark}
\theoremstyle{definition}
\newcommand{\setword}[2]{%
	\phantomsection
	#1\def\@currentlabel{\unexpanded{#1}}\label{#2}%
}
\begin{document}
	\title{\titel}
		\author{
		On-Hei Solomon Lo\thanks{School of Mathematical Sciences, University of Science and Technology of China, Hefei, Anhui 230026, China.}
		\thanks{Institut f\"{u}r Mathematik, Technische Universit\"{a}t Ilmenau, Weimarer Strasse 25,
			D-98693 Ilmenau, Germany. This work was partially supported by the DFG grant SCHM 3186/1-1 and by DAAD (as part of BMBF, Germany) and the Ministry of Education Science, Research and Sport of the Slovak Republic within the project 57320575.}\\
		}
	\date{}
	\maketitle

\begin{abstract}
	We apply the Euler tour technique to find subtrees of specified weight as follows. Let $k, g, N_1, N_2 \in \mathbb{N}$ such that $1 \leq k \leq N_2$, $g + h > 2$ and $2k - 4g - h + 3 \leq N_2 \leq 2k + g + h - 2$, where $h := 2N_1 - N_2$. Let $T$ be a tree of $N_1$ vertices and let $c : V(T) \rightarrow \mathbb{N}$ be vertex weights such that $c(T) := \sum_{v \in V(T)} c(v) = N_2$ and $c(v) \leq k$ for all $v \in V(T)$. We prove that a subtree $S$ of $T$ of weight $k - g + 1 \leq c(S) \leq k$ exists and can be found in linear time. We apply it to show, among others, the following: \begin{itemize}
		\item Every planar hamiltonian graph $G = (V(G), E(G))$ with minimum degree $\delta \geq 4$ has a cycle of length $k$ for every $k \in \{\lfloor \frac{|V(G)|}{2} \rfloor, \dots, \lceil \frac{|V(G)|}{2} \rceil + 3\}$ with $3 \leq k \leq |V(G)|$.
		\item Every $3$-connected planar hamiltonian graph $G$ with $\delta \geq 4$ and $|V(G)| \geq 8$ even has a cycle of length $\frac{|V(G)|}{2} - 1$ or $\frac{|V(G)|}{2} - 2$.
	\end{itemize} Each of these cycles can be found in linear time if a Hamilton cycle of the graph is given. This work was partially motivated by conjectures of Bondy and Malkevitch on cycle spectra of 4-connected planar graphs. 
\end{abstract}

\section{Introduction} \label{sec:intro}
Given a tree $T$ and vertex weights $c: V(T) \rightarrow \mathbb{N}$, it is natural to ask subtrees of which specified weight would exist. Let $S$ be a subtree of $T$. We define $c(S) := \sum_{v \in V(S)} c(v)$. Let $k, g \in \mathbb{N}$ with $1 \leq k \leq c(T)$. We aim at finding a subtree $S$ of weight $k - g + 1 \leq c(S) \leq k$. 
Denote by $N_1$ and $N_2$ the number of vertices of the tree $T$ and the weight $c(T)$ of the whole tree, respectively. Note that if we allow $N_2$ to be arbitrarily large when compared to $N_1$, then it would be hopeless for us to achieve our goal. For example, it can happen that every vertex has weight, say $g' \gg g$, then a subtree $S$ of weight $k - g + 1 \leq c(S) \leq k$ exists if and only if $k \equiv 0, \dots, g - 1 \mod g'$. It means that the desired subtree does not exist for most choices of $k$. We describe by $h$ the difference between $N_1$ and $\frac{N_2}{2}$. Our main goal is to prove the following lemma, which can be interpreted as that the closer the value of our target $k$ to the medium-weight $\frac{N_2}{2}$ and the smaller the medium-weight $\frac{N_2}{2}$ when compared to the number of vertices $N_1$, the more favourable conditions we have in finding the desired subtree. It is complemented by a deterministic linear-time algorithm, which will be given in Section~\ref{subsec:proof}.

\begin{lemma} \label{lem:ksubtree}
	Let $k, g, N_1, N_2 \in \mathbb{N}$ 
	such that 
	$1 \leq k \leq N_2$, $g + h > 2$ and $2k - 4g - h + 3 \leq N_2 \leq 2k + g + h - 2$, where $h := 2N_1 - N_2$. Let $T$ be a tree of $N_1$ vertices and let $c : V(T) \rightarrow \mathbb{N}$ be vertex weights such that $c(T) = N_2$ and $c(v) \leq k$ for all $v \in V(T)$. Then there exists a subtree $S$ of $T$ of weight $k - g + 1 \leq c(S) \leq k$ and $S$ can be found in $O(N_1)$ time.
\end{lemma}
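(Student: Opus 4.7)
My plan is to use the Euler tour of $T$ (rooted at an arbitrary vertex) to reduce the problem to a localized sweep inside a well-chosen region of $T$. I would begin with a single DFS pass that records the Euler tour and, simultaneously, computes for every vertex $v$ the subtree weight $s(v) := c(T_v)$, where $T_v$ denotes the subtree rooted at $v$. Both take $O(N_1)$ time.

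Using the $s$-values I would then locate a minimal vertex $v^*$ satisfying $s(v^*) \geq k - g + 1$; by minimality, every child $u$ of $v^*$ has $s(u) \leq k - g$. If $s(v^*) \leq k$, the subtree $T_{v^*}$ itself lies in the target range $[k-g+1, k]$ and we return it.

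Otherwise $s(v^*) > k$. Any candidate subtree $S \subseteq T_{v^*}$ with $c(S) \geq k - g + 1$ must contain $v^*$, because every proper descendant subtree has weight at most $k - g$. I would initialize $S = \{v^*\}$ and walk along the Euler tour of $T_{v^*}$, attaching completed child subtrees of $v^*$ one at a time as their closing tour events occur. Each attachment raises $c(S)$ by at most $k - g$; if $c(S)$ lands in $[k-g+1, k]$ we return $S$. Otherwise some ``jumping'' child $u_j$ would overshoot, forcing $s(u_j) > g$. We then commit to including $u_j$ but not all of $T_{u_j}$, and continue the same sweep one level deeper by attaching child subtrees of $u_j$, aiming at a shifted but equally wide target interval. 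Since the process moves monotonically down a single root-to-leaf path, each vertex is visited $O(1)$ times and the algorithm runs in $O(N_1)$.

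The main obstacle --- and where the precise numerical hypothesis is indispensable --- will be showing that this descent always terminates with a subtree in the target interval rather than running off at a leaf. The hypothesis $2k - 4g - h + 3 \leq N_2 \leq 2k + g + h - 2$ together with $g + h > 2$ is exactly calibrated for this: the upper bound on $N_2$ limits how rapidly the running weight can overshoot $k$, while the lower bound ensures that enough weight remains in the currently considered subtree to reach the target window. Verifying that these bounds, when translated into the shifted problem at $T_{u_j}$, continue to support the inductive step --- in particular re-establishing the analogue of $g + h > 2$ at each level --- will be the most delicate part of the argument.
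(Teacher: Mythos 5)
Your proposal describes a plausible algorithm but not a proof: the entire content of the lemma is the existence/termination claim, and you explicitly defer it (``the most delicate part'') with only a heuristic gesture at why the numerical hypotheses suffice. The paper's proof of exactly this point is a global double-counting argument: assuming no Euler-tour interval has weight in $[k-g+1,k]$, it derives the lower bound $|V_1| \geq \sum_{i \geq g+1}(i-g-1)|V_i| + 2g + h - 2$ from the weight sum, and then an incompatible upper bound on $|V_1|$ by covering the weight-$1$ vertices with ``discharges'' of maximal overload-discharge quadruples, using the support subtree to control how many overloading subtrees can share an overloading vertex. Nothing in your outline plays the role of this counting, and the conditions $g+h>2$ and $2k-4g-h+3 \leq N_2 \leq 2k+g+h-2$ enter the paper's proof only through that computation; they cannot simply be ``re-established at each level'' of a descent, because they involve the global quantities $N_1,N_2$ and the subtree $T_{u_j}$ you recurse into may locally violate them badly (e.g.\ contain no vertices of weight $1$ at all), so a naive induction on rooted subtrees does not go through.

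There is also a concrete failure mode in the monotone descent itself. Suppose the committed weight is $W \leq k-g$ and the overshooting child is $u_j$, so you must take $u_j$ itself; if $c(u_j) \geq g+1$ and $W + c(u_j) > k$, a single vertex jumps over the whole window $[k-g+1,k]$, and since your process only ever adds vertices it has no way to recover. The paper's overload-discharge process handles precisely this situation by also \emph{removing} vertices from the other end of the Euler-tour interval (the discharge $M \cup \{n\}$), and Observation~\ref{obs:1bound} converts each such event into the bound $|M \cap V_1| \leq c(l) - g - 1$ that feeds the counting. Separately, you restrict the search to $T_{v^*}$ without justifying that a subtree of the target weight exists inside $T_{v^*}$ (the desired subtree could straddle the parent of $v^*$ or lie elsewhere); this is another unproven reduction. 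In short, the algorithmic skeleton resembles the paper's Euler-tour sweep, but the proof that the sweep must succeed --- the heart of the lemma --- is missing.
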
 

For the running time we assume that each arithmetic operation can be done in constant time.

There are numerous results concerning subtrees of a tree with vertex weights, e.g. partitioning a tree into subtrees with constraints, finding a subtree of maximum weight. However, the author is not aware of result similar to ours on finding one subtree of specified weight.

Along with the existence of subtrees of specified weight, we present an optimal linear-time algorithm for finding them. Note that a tree may have exponentially many subtrees in general. Hence, in our algorithm only a rather restricted (linear-size) subclass of subtrees will be considered. We will exploit the Euler tour technique and find a subtree by local search. Formally, given a tree $T = (V(T), E(T))$, we construct a directed cycle $C_T$ of size $2|E(T)|$ and consider the canonical homomorphism which maps vertices of $C_T$ to that of $T$ (as depicted in Figure~\ref{fig:CT}). It is clear that every path in $C_T$ will be correspondingly mapped to a subtree of $T$. We will prove that it is indeed the case that there exists such a subtree satisfying the requirement. Therefore a linear-time algorithm follows as a simple consequence, which searches greedily for a path in $C_T$ such that its corresponding subtree in $T$ is what we are looking for. To prove that such a path in $C_T$ exists, we assume it is not the case, and then deduce a contradiction by counting the number of vertices of weight 1 in $T$ in two ways. Section~\ref{sec:main} is devoted to the proof of Lemma~\ref{lem:ksubtree}.

The problem on finding cycles of specified length appears as one of the most fundamental problems in algorithmic graph theory. By a novel method called \emph{color-coding}, Alon et al.~\cite{Alon1995} gave a randomized algorithm which finds a cycle of length $k$ in linear expected time for a fixed $k$ and a planar graph $G = (V(G), E(G))$ containing such a cycle. It can be derandomized at the price of a $\log |V(G)|$ factor. We refer to~\cite{Alon1997, Feder2010} for more related results on finding cycles of specified length efficiently. In this paper we will apply Lemma~\ref{lem:ksubtree} and obtain some closely related results. We prove, among others, the following: \begin{itemize}
	\item Every planar hamiltonian graph $G$ with minimum degree $\delta \geq 4$ has a cycle of length $k$ for every $k \in \{\lfloor \frac{|V(G)|}{2} \rfloor, \dots, \lceil \frac{|V(G)|}{2} \rceil + 3\}$ with $3 \leq k \leq |V(G)|$.
	\item Every $3$-connected planar hamiltonian graph $G$ with $\delta \geq 4$ has a cycle of length $\frac{|V(G)|}{2} - 1$ or $\frac{|V(G)|}{2} - 2$ if $|V(G)| \geq 8$ is even.
\end{itemize} Each of these cycles can be found in linear time if a Hamilton cycle of the graph is given. These results were partially motivated by two conjectures, one posed by Bondy in 1973 and another one by Malkevitch in 1988. A detailed account of this subject will be given in Section~\ref{sec:cycle}.

Finding a subtree of specified weight in a tree can be seen as a harder problem than finding a subset of specified sum in a multiset of integers, as one can weight the vertices of the tree by the integers in the multiset. We refer to Appendix~\ref{sec:number} for further discussion.

\section{Notation} \label{sec:notation}

We use minus sign to denote set subtraction, and parentheses would be omitted for single elements if it causes no ambiguity.

We consider only simple graphs in this paper. Let $G$ be an undirected graph. We denote by $V(G)$ and $E(G)$ the vertex set and the edge set of $G$ and call $|V(G)|$ and $|E(G)|$ order and size of $G$, respectively. We denote by $d_G(v)$ the degree of vertex $v \in V(G)$ in the graph $G$. The minimum and maximum degrees of $G$ are defined as $\delta(G) := \min_{v \in V(G)} d_G(v)$ and $\Delta(G) := \max_{v \in V(G)} d_G(v)$, respectively. For $W \subseteq V(G)$, $G[W]$ is defined to be the induced subgraph of $G$ on $W$. Let $c : V(G) \rightarrow \mathbb{N}$ be vertex weights. For $i \in \mathbb{N}$, we denote by $V_i(G) \subseteq V(G)$ the set of vertices $v$ in $G$ with $c(v) = i$. We write $V_i := V_i(G)$ if there is no ambiguity. Let $H$ be a subgraph of $G$, we define $c(H) := \sum_{v \in V(H)} c(v)$.

In an undirected graph $G$ we denote by $vw$ or $wv$ the edge with endvertices $v, w \in V(G)$. We abuse the notation of a sequence of vertices as follows. Let $t \in \mathbb{N}$. For $t$ distinct vertices $v_1, v_2, \dots, v_t$, we denote by $v_1 v_2 \dots v_t$ the \emph{path} $P$ with endvertices $v_1, v_t$ such that $V(P) := \{v_1, v_2, \dots, v_t\}$ and $E(P) := \{v_1 v_2, v_2 v_3, \dots, v_{t - 1} v_t\}$. For $t \geq 3$ distinct vertices $v_1, v_2, \dots, v_t$, we denote by $v_1 v_2 \dots v_t v_1$ the \emph{cycle} $K$ of length $t$ such that $V(K) := \{v_1, v_2, \dots, v_t\}$ and $E(K) := \{v_1 v_2, v_2 v_3, \dots, v_{t - 1} v_t, v_t v_1\}$.

In a directed graph $G$ we denote by $vw$ the edge directed from $v$ to $w$ for $v, w \in V(G)$. Let $C$ be a directed cycle. For $u, v \in V(C)$, we define $[u, v]_C$ to be the path directed from $u$ to $v$ along $C$.
Subscripts can be omitted if it is clear from the context. 
Let $vw$ be an edge in $C$, we define $v^+ := w$ and $w^- := v$.

For a plane graph $G$, we identify the faces of $G$ not only with the vertices in the dual graph $G^*$ but also with the cycles in the boundaries of the faces provided that $G$ is 2-connected and is not a cycle.

Let $T$ be a tree. For $vw \in E(T)$, we denote by $T[vw; v]$ the connected component of $T - vw$ containing $v$. Given a vertex $a \in V(T)$, we specify the tree $T$ rooted at $a$ by $T^{(a)}$. For $v \in V(T)$, $T^{(a)}_v$ is defined as the subtree of $T$ containing $v$ and all of its descendants in $T^{(a)}$.

A graph $G$ is said to be $\kappa$-connected for some $\kappa \in \mathbb{N}$ if $G$ has at least $\kappa + 1$ vertices and $G - U$ is connected for any $U \subseteq V(G)$ with $|U| < \kappa$.

\section{Find Subtrees of Specified Weight} \label{sec:main}

This section is devoted to the proof of Lemma~\ref{lem:ksubtree}. To this end, we \emph{may} assume that it doesn't hold and then deduce a contradiction. However it would be hopeless to derive an efficient algorithm from such a proof, since we would then have to search over possibly exponentially many subtrees of a tree. Fortunately, we can actually consider a linear-size subclass of subtrees instead, which we define in Section~\ref{sec:DFS}. Since then we assume, towards a contradiction, that there is no subtree of $T$ having the desired weight in this subclass. We outline how to prove Lemma~\ref{lem:ksubtree} by double counting in Section~\ref{sec:oda}. We introduce a helpful notion called support subtree in Section~\ref{sec:support}, and study the subtrees in the aforementioned linear-size subclass in Section~\ref{sec:study}. We give a proof of Lemma~\ref{lem:ksubtree} and a pseudocode of a linear-time algorithm in Section~\ref{subsec:proof}. In Section~\ref{sec:examples} we present some examples showing that the conditions in Lemma~\ref{lem:ksubtree} are tight from several aspects.

\subsection{An Overload-Discharge Approach} \label{sec:ODa}

\subsubsection{Overloading Subtrees by ETT} \label{sec:DFS}

To define the subclass of subtrees mentioned above, we consider the subtrees collected by the so-called \emph{Euler tour technique} (ETT) which was first introduced by Tarjan and Vishkin~\cite{Tarjan1984} and has abundant applications in computing and data structures.

We assume a fixed planar embedding of the tree $T$ and we walk around it, i.e. we see edges of $T$ as walls perpendicular to the plane and we walk on the plane along the walls. This walk yields a cycle of size $2(|V(T)| - 1)$.

To make it precise, we define the auxiliary directed cycle graph $C_T$ as follows. For each $v \in V(T)$, we enumerate the edges incident to $v$ in the clockwise order according to the planar embedding and denote them by $e_{v, 1}, e_{v, 2}, \dots, e_{v, d_T(v)}$. The vertex set $V(C_T)$ consists of $d_T(v)$ vertices $w_{v, 1}, w_{v, 2}, \dots, w_{v, d_T(v)}$ for each $v \in V(T)$. Let $w_{v, d_T(v) + 1} := w_{v, 1}$. And, for every edge $uv \in E(T)$, say $uv = e_{u, i} = e_{v, j}$ for some $i \in \{1, \dots, d_T(u)\}$ and $j \in \{1, \dots, d_T(v)\}$, $E(C_T)$ contains the edges $w_{u, i} w_{v, j + 1}$ and $w_{v, j} w_{u, i + 1}$. It is clear that $C_T$ is our desired cycle of size $2(|V(T)| - 1)$ (see Figure~\ref{fig:CT}).

Note that a directed path in $C_T$ can be naturally corresponded to a subtree in $T$. Moreover, growing a subtree by this walking-around-walls in $T$ is equivalent to growing a directed path in $C_T$.

We define the mapping $\rho$ (a homomorphism) from $V(C_T)$ to $V(T)$ by $\rho(w_{v, i}) := v$ for $w_{v, i} \in V(C_T)$ with $v \in V(T)$ and $i \in \{1, \dots, d_T(v)\}$. We also extend this mapping for paths $[u, v]$ directed from $u$ to $v$ in $C_T$ ($u, v \in V(C_T)$) by defining $\rho([u, v]) := T[\{\rho(w) : w \in V([u, v])\}]$. We then extend the weight function $c$ to the vertices $w$ and directed paths $[u, v]$ in $C_T$ ($w, u, v \in V(C_T)$) by $c(w) := c(\rho(w))$ and $c([u, v]) := c(\rho([u, v]))$.

Here we state an assumption (towards a contradiction) which we adopt from now on: \begin{itemize} [\textbf{Assumption \setword{($\Omega$)}{ass:ksubtree}.}]
	\item There are no $x, y \in V(C_T)$ with ${k} - g + 1 \leq c([x, y]) \leq {k}$.
\end{itemize} In other words there is no subtree of $T$ with weight between ${k} - g + 1$ and ${k}$ can be found by searching along the Euler tour. Once we show that Assumption~\ref{ass:ksubtree} cannot hold, we can assure that there exists a subtree $S$ of $T$ having weight ${k} - g + 1 \leq c(S) \leq {k}$. Indeed, we can have some linear-size subclass of subtrees that contains some subtree $S$ having weight ${k} - g + 1 \leq c(S) \leq {k}$. For instance, if $k < c(T)$, we can consider the subtrees corresponding to the paths $[u, v]$ ($u, v \in V(C_T)$) each satisfies $c([u, v]) \le k$, $c([u^-, v]) > k$ and $c([u, v^+]) > k$. There are at most $|V(C_T)| = O(|V(T)|)$ such subtrees, and at least one of them is of weight between $k - g + 1$ and $k$ when Assumption~\ref{ass:ksubtree} doesn't hold. This helps us to devise a linear-time algorithm (Algorithm~\ref{alg:overloaddischarge}) for searching a subtree of the desired weight.

By Assumption~\ref{ass:ksubtree}, the inequalities $c([u, v]) \ge k - g + 1$ and $c([u, v]) \le k$ ($u, v \in V(C_T)$) are equivalent to $c([u, v]) > k$ and $c([u, v]) < k - g + 1$, respectively. (Such usage of Assumption~\ref{ass:ksubtree} would occur tacitly.) 

We mention some more consequences that follow from Assumption~\ref{ass:ksubtree}. It is readily to see that $c(v) \leq k - g$ for any $v \in V(T)$ and $N_2 > k$. Consider a path $[u, v]$ ($u, v \in V(C_T)$) satisfying $c([u, v]) \ge k$, $c([u^+, v]) \le k$ and $c([u, v^-]) \le k$, equivalently, $c([u, v]) > k$, $c([u^+, v]) < k - g + 1$ and $c([u, v^-]) < k - g + 1$. It is clear that such a path exists, $\rho(u) \neq \rho(v)$, and both $c(u)$, $c(v)$ are at least $g + 1$. In particular, we have $\sum_{i \geq g + 1} |V_i| \geq 2$.

Let $u, v \in V(C_T)$. $[u, v]$ is \emph{$k$-overloading} or simply \emph{overloading} if $c([u^-, v^-]) > k$, $c([u, v^-]) \le k$ and $c([u, v]) > k$, and we say $\rho([u, v])$ is an \emph{overloading subtree}. Note that an overloading path always exists when we assume~\ref{ass:ksubtree}, since we are given that $1 \le k < N_2 = c(T)$ and $c(v) \le k$ for every $v \in V(T)$.

\begin{figure}[!ht]
	\centering
	\subfloat[The tree $T$ with vertex weights. \label{subfig:vertexweight2}]{%
		\includegraphics[scale=1]{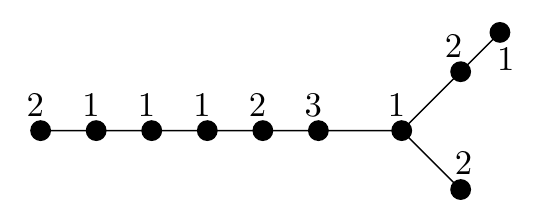}
	}
	\hspace{1.5cm}
	\subfloat[The tree $T$, and the auxiliary cycle $C_T$ whose edges are directed clockwise. Set $k := 7$ and $g := 1$. $Q_{x, y}$ is a maximal overload-discharge quadruples, since $c({[}x, y{]} )> 7$, $c({[}x, y^-{]}) < 7$ and $c({[}x^-, y^-{]}) > 7$. The path ${[}x, y{]}$ and the corresponding overloading subtree are indicated in red and orange, respectively.\label{subfig:overloadingsubtree}]{%
		\includegraphics[scale=1]{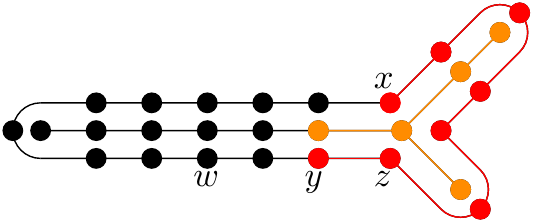}
	}
	\caption{Walk around the tree by ETT.}
	\label{fig:CT}
\end{figure}

\subsubsection{Bounds on $|V_1|$} \label{sec:oda}

Now we see how we can count the number of vertices of weight 1 in two ways so that a contradiction 
may occur. By considering the sum of all the vertex weights, we have that \begin{align*}
N_2 &= \sum_{i \geq 1} i |V_i|\\
&= 2 \sum_{i \geq 1} |V_i| + \sum_{1 \leq i \leq g} (i - 2) |V_i| + \sum_{i \geq g + 1} (i - 2) |V_i|\\
&= 2 N_1 - |V_1| + \sum_{2 \leq i \leq g} (i - 2) |V_i| + \sum_{i \geq g + 1} (i - g - 1) |V_i| + \sum_{i \geq g + 1} (g - 1) |V_i|\\
&= \sum_{i \geq g + 1} (i - g - 1)|V_i| + \sum_{i \geq 2} \min \{i - 2, g - 1\} |V_i| + 2 N_1 - |V_1|.
\end{align*}As $2 N_1 - N_2 =: h$ and $\sum_{i \geq g + 1} |V_i| \geq 2$, we have the following lower bound on $|V_1|$: \begin{align} \label{for:1}
|V_1| \geq \sum_{i \geq g + 1} (i - g - 1)|V_i| + 2g + h - 2. \tag{$\diamond$}
\end{align} The intuitive idea of our proof of Lemma~\ref{lem:ksubtree} is that if $|V_1|$ is large enough, i.e. there are many vertices of weight $1$, then it should facilitate the search of subtree of the desired weight. Therefore, if Lemma~\ref{lem:ksubtree} would not hold, there would be some upper bound on the number of vertices of weight $1$ showing that the inequality (\ref{for:1}) must be contradicted. The upper bound is realized by the following observation.

\begin{observation} \label{obs:1bound}
	Let $g, {k} \in \mathbb{N}$. Let $S$ be a subtree of $T$ with $c(S) > {k}$, $l$ be a leaf of $S$ with $c(S - l) < {k} - g + 1$, $M$ be a subset of $V(S) - l$ and $n$ a vertex in $S - M - l$ such that $S - M$ remains as a tree, $n$ is a leaf of $S - M$, $c(S - M) > {k}$ and $c(S - M - n) < {k} - g + 1$. Then we have \begin{align} \label{for:observation}
	|M \cap V_1| \leq c(S) - ({k} + 1) \leq c(l) - g - 1. \tag{$\ast$}
	\end{align}
\end{observation}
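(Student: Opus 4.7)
The plan is to prove the two inequalities in ($\ast$) separately; both are elementary counting steps that rely on the integrality of the vertex weights.

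For the right inequality $c(S) - (k+1) \leq c(l) - g - 1$, the starting point is the hypothesis $c(S - l) < k - g + 1$. Since all weights lie in $\mathbb{N}$, so does $c(S - l)$, hence $c(S - l) \leq k - g$. Writing $c(S - l) = c(S) - c(l)$ and rearranging yields $c(S) - (k + 1) \leq c(l) - g - 1$, as desired.

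For the left inequality $|M \cap V_1| \leq c(S) - (k+1)$, I would first note that each vertex in $M \cap V_1$ contributes exactly $1$ to $c(M)$, while every vertex in $M$ has weight at least $1$, so $|M \cap V_1| \leq c(M)$. Next, since $M \subseteq V(S) - l \subseteq V(S)$, we may write $c(M) = c(S) - c(S - M)$. Finally, the hypothesis $c(S - M) > k$ combined with integrality gives $c(S - M) \geq k + 1$, so $c(M) \leq c(S) - (k + 1)$, and the desired bound follows by chaining the two inequalities.

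I do not expect a genuine obstacle here: the observation is a bookkeeping lemma whose content is essentially already encoded in the stated hypotheses. In particular, the conditions attached to $n$ — namely that $S - M$ remains a tree, that $n$ is a leaf of $S - M$, and that $c(S - M - n) < k - g + 1$ — are not used in the proof of ($\ast$) itself. They are presumably listed because they describe the structural setting (a second overloading configuration obtained after removing $M$) in which this observation will later be invoked; carrying them in the statement makes subsequent applications cleaner even though they play no active role in establishing ($\ast$).
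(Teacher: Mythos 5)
Your proof is correct and matches the paper's argument exactly: the paper chains $|M \cap V_1| \leq c(M) \leq c(S) - (k+1) \leq c(l) - g - 1$ using $c(S) - c(M) = c(S-M) > k$ and $c(S) - c(l) = c(S-l) < k - g + 1$, just as you do. Your side remark that the hypotheses on $n$ are not needed for $(\ast)$ itself is also accurate.
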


The vertex set $M$ can be seen as a set of vertices which are collected from a leave-cutting process, i.e. we cut leaves (other than $l$) one by one from $S$ with that the weight of the remainder still larger than ${k}$, and it becomes less than ${k} - g + 1$ once we further cut the vertex $n$. Note that $l$ is not cut from the subtree and it always stays as a leaf in the remaining part. 

\begin{proof}
	Since $c(S) - c(M) = c(S - M) > {k}$ and $c(S) - c(l) = c(S - l) < {k} - g + 1$, we have $|M \cap V_1| \leq c(M) \leq c(S) - ({k} + 1)  \leq c(l) - g - 1$.
\end{proof} Note that the conditions given in Observation~\ref{obs:1bound} appear naturally if we have a tree $T$ which has no subtree of weight between ${k} - g + 1$ and ${k}$. We carry out an \emph{overload-discharge process} as follows. We grow a subtree (say a single vertex) which is of weight less than ${k} - g + 1$ until we grow it with a vertex $l$ which makes the weight of the subtree at least ${k} - g + 1$. As we assume that no subtree is of weight between ${k} - g + 1$ and ${k}$, when we halt the growth, the weight of the subtree is actually not only at least ${k} - g + 1$ but greater than ${k}$. We then start to cut its leaves (other than $l$) one by one until the weight declines to be less than ${k} - g + 1$ again. The overload and discharge steps can always be achieved provided that $N_2 > {k}$ and $c(v) \leq {k} - g$ for all $v \in V(T)$. We say that $l$ \emph{overloads} $S$ and a \emph{discharge} $M \cup \{n\}$ containing the \emph{last discharge} $n$ follows, and that $(S, l, M, n)$ is an \emph{overload-discharge quadruple}. It is clear that $l$ and $n$ are two distinct vertices having weight at least $g + 1$.

Let us have a look of a crude argument on how a contradiction would occur. Suppose we have a family of overload-discharge quadruples $(S_f, l_f, M_f, n_f)$ (with some indices $f$) such that the vertices of weight 1 in $T$ is \emph{covered} by the discharges, i.e. $V_1 \subseteq \bigcup_f M_f$, and each overloading vertex $l_f$ corresponds to precisely one overload-discharge quadruple, then, by the inequality~(\ref{for:observation}), we can simply deduce the following contradiction to the inequality (\ref{for:1}):\begin{align*}
|V_1| \leq \sum_f |M_f \cap V_1| \leq \sum_f (c(l_f) - g - 1) \leq \sum_{i \geq g + 1} (i - g - 1)|V_i|.
\end{align*} Although it is not always possible to have such a family of quadruples, we are still able to have some \emph{sufficiently good} family which leads to a contradiction to the inequality~(\ref{for:1}) even if we only assume~\ref{ass:ksubtree}. We will consider the family of overload-discharge quadruples corresponding to overloading subtrees.

We demonstrate how an overload-discharge quadruple can be formed by considering paths in $C_T$. Let $u, v$ be two distinct vertices of $C_T$. If $c([u, v^-]) < {k} - g + 1$ but $c([u, v]) > {k}$, then there exists $w \in V([u, v^-])$ such that $c([w, v]) > {k}$ and $c([w^+, v]) < {k} - g + 1$. It is clear that $\rho(v)$ overloads the subtree $\rho([u, v])$ and we call $$(\rho([u, v]), \rho(v), V(\rho([u, v])) - V(\rho([w, v])), \rho(w)) =: Q_{u, v}$$ an \emph{overload-discharge quadruple associated with $u, v$}.

An overload-discharge quadruple $Q_{u, v}$ associated with $u, v \in V(C_T)$ is \emph{maximal} if $c([u^-, v^-]) > {k}$ holds, or equivalently, $[u, v]$ is an overloading path in $C_T$ (see Figure~\subref*{subfig:overloadingsubtree}). We let $\mathcal{Q}(T; c, {k}) =: \mathcal{Q}$ be the family of all maximal overload-discharge quadruples associated with some $u, v \in V(C_T)$.

\subsection{Support Vertices and Support Subtree} \label{sec:support}

In order to see how the overloading subtrees from $\mathcal{Q}$ would be \emph{packed} in the weighted tree $T$, we need to study its structure in more detail. We introduce the notion of support vertices and support subtree of the weighted tree $T$ in this section. 

We first fix an arbitrary vertex $a \in V(T)$ and consider the rooted tree $T^{(a)}$. Note that there always exists a vertex $r$ such that $c(T^{(a)}_r) > k$ and $c(T^{(a)}_w) \leq k$ for all children $w$ of $r$ in $T^{(a)}$, as we assume $c(T^{(a)}) = N_2 > k$. We then take one such vertex $r$ and consider the tree $T^{(r)}$ rooted at $r$. Let $r_1, \dots, r_t$ ($t \in \mathbb{N}$) be the vertices each satisfies that $c(T^{(r)}_{r_i}) > k$ and $c(T^{(r)}_w) \leq k$ for all children $w$ of $r_i$ in $T^{(r)}$ ($i = 1, \dots, t$). We call $r, r_1, \dots, r_t$ \emph{support vertices} of $T$, and the minimal subtree $T^*$ containing all support vertices \emph{support subtree} of $T$.

Note that $T^{(r)}_w$ is exactly the same subtree as $T^{(a)}_w$ for every $w \in V(T^{(a)}_r) - r$, therefore $c(T^{(r)}_w) = c(T^{(a)}_w) \leq k$ and $r_i \notin V(T^{(a)}_r) - r$ for every $i = 1, \dots, t$. If there are two distinct support vertices $r_{i_1}, r_{i_2}$ with $i_1, i_2 \in \{1, \dots, t\}$, we have that $r_{i_1}$ is neither ancestor nor descendant of $r_{i_2}$ in $T^{(r)}$, and, in particular, both $r_{i_1}, r_{i_2}$ cannot be $r$. It is possible that $r$ is one of the $r_1, \dots, r_t$; it happens if and only if $r$ is the only support vertex and $|V(T^*)| = 1$. We conclude that the leaves of $T^*$ are exactly the support vertices if $|V(T^*)| > 1$, as $T^*$ is the union of the paths $P_i$ ($i = 1, \dots, t$), where $P_i$ is the path in $T$ with endvertices $r$ and $r_i$.

It is clear that $T - E(T^*)$ is a forest of $|V(T^*)|$ subtrees. For $\tilde{r} \in V(T^*)$, we denote by $T^*[\tilde{r}]$ the maximal subtree of $T - E(T^*)$ containing $\tilde{r}$. If $|V(T^*)| > 1$, then the subtrees $T^{(a)}_r, T^{(r)}_{r_1}, \dots, T^{(r)}_{r_t}$ are exactly $T^*[r], T^*[r_1], \dots, T^*[r_t]$, and each of them has weight at least $k + 1$.

Let $t^*$ be the number of support vertices. It is clear that $t^* = t = 1$ if $|V(T^*)| = 1$, and $t^* = t + 1$ if $|V(T^*)| > 1$. We claim that $N_2 \geq t^*(k + 1)$. It holds trivially if $|V(T^*)| = 1$. If $|V(T^*)| > 1$, then we have $t^*$ vertex-disjoint subtrees $T^*[r], T^*[r_1], \dots, T^*[r_t]$. Thus we have $$N_2 = c(T) \geq c(T^*[r]) + c(T^*[r_1]) + \dots + c(T^*[r_t]) \geq t^*(k + 1).$$ In particular, $N_2 \geq 2k + 2$ if $|V(T^*)| > 1$.

We remark that for a fixed $k$ the support tree $T^*$ is unqiuely defined if $|V(T^*)| > 1$, while it is not always uniquely defined if $|V(T^*)| = 1$, as it would depend on the initial root $a$. For ease of presentation we assume that some support tree is fixed throughout.

\subsection{Overloading Vertices and Discharges} \label{sec:study}

In this section we focus on the vertices which overload subtrees from $\mathcal{Q}$ and see how many discharges they can carry each. We first show a sufficient condition for a vertex to be contained in some discharge from $\mathcal{Q}$.

\begin{lemma} \label{lem:indischarge}
	Let $vw$ be an edge in $T$. If $c(T[vw; w]) \geq k - g$, then there exists $(S, l, M, n) \in \mathcal{Q}$ with $v \in M \cup \{n\}$.
\end{lemma}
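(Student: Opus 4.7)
My plan is to construct an explicit overload--discharge quadruple placing $v$ in the discharge. Set $B := T[vw;w]$, so $c(B) \geq k-g$. Applying Assumption~\ref{ass:ksubtree} to the subtrees $B$ and $B \cup \{v\}$ gives $c(B) \in \{k-g\} \cup [k+1,\infty)$ and $c(v) + c(B) \geq k+1$. In the cycle $C_T$ let $y \in V(C_T)$ be the copy of $v$ whose forward step crosses the edge $vw$ into $B$, and let $y'$ be the copy of $v$ at which the Euler tour returns from $B$ (with $y = y'$ if $d_T(v) = 1$); then $\rho([y^+, y'^-]) = V(B)$, and $\rho([y, y'^-]) = \{v\} \cup V(B)$ has weight $c(v) + c(B) > k$.

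Let $z \in V(C_T)$ be the first vertex forward from $y$ in $C_T$ with $c([y, z]) > k$. Since $c([y, y'^-]) > k$ and the weight is monotone under forward extension, $z$ exists and lies in $[y^+, y'^-]$, so $\rho(z) \in V(B)$, and by the minimality of $z$ together with Assumption~\ref{ass:ksubtree}, $c([y, z^-]) \leq k-g$. Let $x$ be the vertex farthest backward from $z^-$ satisfying $c([x, z^-]) \leq k-g$; by~\ref{ass:ksubtree} the backward-extension weights either remain $\leq k-g$ or jump to values $\geq k+1$, so $x$ is well defined, and because $c([y, z^-]) \leq k-g$ the vertex $y$ lies in the arc $[x, z^-]$. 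Then $[x, z]$ is overloading: $c([x, z^-]) \leq k-g$, $c([x^-, z^-]) \geq k+1$, and $c([x, z]) \geq c([y, z]) > k$. Setting $S := \rho([x, z])$, we have $v = \rho(y) \in V(S)$.

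It remains to locate the discharge cutoff $x^\# \in V([x, z^-])$ of the quadruple $Q_{x,z} = (S, l, M, n)$ and verify $v \in M \cup \{n\}$. By Assumption~\ref{ass:ksubtree} the weight $c([y^+, z])$ must be either $\leq k-g$ or $\geq k+1$. If $c([y^+, z]) \leq k-g$, then the unique backward transition of the weight from $\leq k-g$ to $\geq k+1$ occurs at $x^\# = y$ (since $c([y^+, z]) \leq k-g$ while $c([y, z]) > k$); hence $n = \rho(y) = v \in M \cup \{n\}$. If $c([y^+, z]) \geq k+1$, the transition lies strictly inside the Euler-tour arc of $B$, giving $x^\# \in [y^+, z^-]$ and $\rho([x^\#, z]) \subseteq V(B)$; since $v \notin V(B)$ we get $v \notin \rho([x^\#, z])$, and thus $v \in V(S) \setminus V(\rho([x^\#, z])) = M$. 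The main obstacle is ensuring $[x, z]$ is simultaneously overloading and places $v$ on the discharge side of the cutoff; the dichotomy imposed by Assumption~\ref{ass:ksubtree} on $c([y^+, z])$ resolves the two qualitatively different configurations of the cutoff, and in both $v$ lands in the discharge.
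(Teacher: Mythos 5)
Your proof is correct and follows essentially the same route as the paper: start the Euler tour at the copy of $v$ immediately preceding the crossing of $vw$ into $T[vw;w]$, observe that $c(v)+c(T[vw;w])\geq k-g+1$ forces overloading before $v$ is revisited, extend backwards to obtain a maximal quadruple, and conclude that the discharge cutoff lies at or beyond that copy of $v$, so $v$ falls into $M\cup\{n\}$. Your two-case analysis of $c([y^+,z])$ is just a more explicit rendering of the paper's one-line observation that $u$ cannot survive the discharge because $c([u,y])\geq k-g+1$.
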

\begin{proof}
	Let $i \in \{1, \dots, d_T(v)\}$ such that the edge $vw$ is $e_{v, i}$. We grow a path in $C_T$ from $u := w_{v, i}$ to obtain an overload-discharge quadruple $Q_{u, y}$ associated with $u, y$ for some $y \in V(C_T)$. The corresponding situation in $T$ is that a subtree starts growing at $v$, then traverses along the edge $e_{v, i}$ immediately. It will overload, i.e. the weight reaches larger than $k$, without revisiting $v$, since $c(v) + c(T[vw; w]) \geq k - g + 1$.
	
	We can augment the path $[u, y]$ backwards along the cycle $C_T$ to obtain $[x, y]$ such that $c([x, y^-]) < k - g + 1$ but $c([x^-, y^-]) > k$. Then we have $Q_{x, y} \in \mathcal{Q}$. Note that $u$ is the only vertex in $[u, y]$ with $\rho(u) = v$, and $u$ cannot stay in the path after discharge since $c([u, y]) \geq k - g + 1$. Therefore the discharge of $Q_{x, y}$ must contain $v$.
\end{proof}

Now we give a necessary condition for a vertex to be an overloading vertex in some quadruple in $\mathcal{Q}$.

\begin{lemma} \label{lem:olvertex} 
	Let $Q_{x, y} \in \mathcal{Q}$ be an overload-discharge quadruple associated with some $x, y \in V(C_T)$. We have $c(T[\rho(y)\rho(y^-); \rho(y^-)]) + c(\rho(y)) > k$.
\end{lemma}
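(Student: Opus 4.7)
The plan is to read off the claim from the interaction between the overloading conditions on $[x,y]$ and the cyclic structure of $C_T$. First observe that since every edge of $C_T$ arises by construction from a tree edge and has endpoints with distinct $\rho$-images, the edge $e := \rho(y^-)\rho(y)$ is a genuine edge of $T$, so the subtree $T[\rho(y)\rho(y^-); \rho(y^-)]$ is well-defined; call it $T^-$ and set $T^+ := T[\rho(y)\rho(y^-); \rho(y)]$. The Euler tour crosses $e$ exactly twice, so $C_T$ is cut into two arcs: one arc $A^-$ of the form $[\beta^+, y^-]$ with $\rho(A^-) = T^-$, and one arc $A^+ = [y, \beta]$ with $\rho(A^+) = T^+$, where $\beta\beta^+$ is the second (returning) crossing of $e$. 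Unpacking the construction of $C_T$ in Section~\ref{sec:DFS}, one reads off that $\rho(\beta) = \rho(y)$ and $\rho(\beta^+) = \rho(y^-)$.

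The key step is a case distinction on where $x$ lies. In the good case $x \in V(A^-)$, the path $[x, y^-]$ sits entirely inside $A^-$, so $\rho([x, y^-]) \subseteq T^-$ and therefore $c([x, y^-]) \leq c(T^-)$. Since $\rho(y) \in V(T^+)$ is disjoint from $\rho([x, y^-])$, we get $c([x, y]) = c([x, y^-]) + c(\rho(y))$, and the overloading condition $c([x, y]) > k$ yields
\[ c(T^-) + c(\rho(y)) \;\geq\; c([x, y^-]) + c(\rho(y)) \;=\; c([x, y]) \;>\; k, \]
which is exactly the claim.

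The other case, $x \in V(A^+)$, must be ruled out. Here $[x, y^-]$ would wrap forward through $\beta$, cross $e$, and continue along $A^-$ to $y^-$, so in particular $\beta \in V([x, y^-])$. Since $\rho(\beta) = \rho(y)$, we get $\rho(y) \in \rho([x, y^-])$, so appending $y$ does not change the underlying vertex set and $c([x, y]) = c([x, y^-]) \leq k$, directly contradicting the overloading inequality $c([x, y]) > k$.

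The main (and essentially only) obstacle is bookkeeping: organising the two-arc decomposition of $C_T$ with respect to $e$ and verifying $\rho(\beta) = \rho(y)$ from the explicit construction of $C_T$. Once this is in place, the weight estimate in the good case is a one-line calculation and the wrap-around case is self-defeating.
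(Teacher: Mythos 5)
Your proof is correct and follows essentially the same route as the paper: the paper's one-line argument simply asserts the containment $\rho([x,y^-]) \subseteq T[\rho(y)\rho(y^-);\rho(y^-)]$ (which holds because the overloading inequality $c([x,y^-]) \le k < c([x,y])$ forces $\rho(y) \notin V(\rho([x,y^-]))$) and then applies $c([x,y]) > k$ exactly as you do. Your two-arc decomposition of $C_T$ and the elimination of the wrap-around case is just a more explicit verification of that containment.
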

\begin{proof}
	It is clear that the subtree $\rho([x, y^-])$ is contained in the subtree $T[\rho(y)\rho(y^-); \rho(y^-)]$. Therefore, $c(T[\rho(y)\rho(y^-); \rho(y^-)]) + c(\rho(y)) \geq c([x, y]) > k$ as $\rho(y)$ overloads $\rho([x, y])$.
\end{proof}

We next show that if there are more than one overloading subtrees having the same overloading vertex, then the mutual intersection among these subtrees can only be the overloading vertex, and such a vertex must be in the support subtree. We also prove upper bounds on these discharges.

\begin{lemma} \label{lem:overlap}
	Let $Q_{x_1, y_1}$ and $Q_{x_2, y_2}$ be two distinct overload-discharge quadruples associated with $x_1, y_1$ and $x_2, y_2$, where $x_1, y_1, x_2, y_2 \in V(C_T)$, in $\mathcal{Q}$, respectively. If $\rho(y_1) = \rho (y_2) =: l$, we have $$V(\rho([x_1, y_1])) \cap V(\rho([x_2, y_2])) = \{l\}$$ and $l$ must be a vertex in the support subtree $T^*$.
\end{lemma}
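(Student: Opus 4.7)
The plan is to handle the intersection claim and the location of $l$ in turn.

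For the intersection, I first observe that $l$ is visited only at $y_j$ along $[x_j, y_j]$: if $l \in V(\rho([x_j, y_j^-]))$, then $\rho([x_j, y_j]) = \rho([x_j, y_j^-])$ and hence $c([x_j, y_j]) = c([x_j, y_j^-]) \le k$, contradicting the overloading condition $c([x_j, y_j]) > k$. Writing $y_j = w_{l, i_j}$, this places $[x_j, y_j]$ inside the arc of $C_T$ from $w_{l, i_j - 1}^+$ to $w_{l, i_j}$ (with indices mod $d_T(l)$), and by the Euler-tour structure $\rho$ maps this arc onto $T[l m_j; m_j] \cup \{l\}$, where $m_j$ denotes the other endpoint of the edge $e_{l, i_j - 1}$. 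Given $y$, the overloading conditions determine $x$ uniquely (as the only vertex satisfying $c([x, y^-]) \le k$ and $c([x^-, y^-]) > k$), so $Q_{x_1, y_1} \neq Q_{x_2, y_2}$ forces $y_1 \neq y_2$; hence $i_1 \neq i_2$ and $m_1 \neq m_2$. Since $T[lm_1; m_1]$ and $T[lm_2; m_2]$ are vertex-disjoint branches at $l$, the intersection in question equals $\{l\}$.

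For the location of $l$, combining $c([x_j, y_j]) > k$ with the containment above yields $c(T[lm_j; m_j]) + c(l) > k$ for $j = 1, 2$, so $l$ admits two distinct neighbors $m$ satisfying $c(T[lm; m]) > k - c(l)$. I will derive a contradiction from $l \notin V(T^*)$ by bounding, in an appropriate rooting, the weight of the subtree at $l$ on the side away from $T^*$ by $k$; this will leave at most one branch of $l$ able to exceed the threshold $k - c(l)$.

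If $|V(T^*)| = 1$ with $T^* = \{r\}$, then $l \neq r$; rooting at $r$, the defining property of $r$ gives $c(T^{(r)}_w) \le k$ for every $T^{(r)}$-child $w$ of $r$, so $c(T^{(r)}_l) \le k$ because $l$ lies in some such $T^{(r)}_w$. If $|V(T^*)| > 1$, then $l \in V(T^*[\tilde{r}])$ for the unique $\tilde{r} \in V(T^*)$ (with $\tilde{r} \neq l$); the path in $T^*[\tilde{r}]$ from $\tilde{r}$ toward $l$ starts at a neighbor $w$ of $\tilde{r}$ with $w \notin V(T^*)$ (otherwise $\tilde{r}w$ would lie in $E(T^*)$), and $w$ is a $T^{(r)}$-child of $\tilde{r}$ satisfying $c(T^{(r)}_w) \le k$, since otherwise a deepest vertex in $T^{(r)}_w$ with subtree weight exceeding $k$ would be a support vertex, forcing $w$ onto a root-to-support path and hence into $V(T^*)$. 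As $l \in V(T^{(r)}_w)$, again $c(T^{(r)}_l) \le k$. In either case, writing $p$ for the $T^{(r)}$-parent of $l$, the branches $T[lm; m]$ with $m \neq p$ are exactly the $T^{(r)}$-child-subtrees of $l$, whence $\sum_{m \neq p} c(T[lm; m]) = c(T^{(r)}_l) - c(l) \le k - c(l)$, and at most the single branch $T[lp; p]$ can exceed $k - c(l)$, the required contradiction.

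The main obstacle is the case $|V(T^*)| > 1$, where the support-vertex characterization of $T^*$ must be carefully invoked to ensure that any non-$T^*$ subtree hanging off a vertex of $V(T^*)$ has weight at most $k$; this is what converts the ``two heavy branches at $l$'' property into membership of $l$ in $V(T^*)$.
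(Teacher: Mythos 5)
Your proof is correct and follows essentially the same route as the paper's: distinct maximal quadruples sharing the overloading vertex $l$ must enter $l$ along distinct edges, so their images minus $l$ lie in disjoint branches of $T - l$; and if $l \notin V(T^*)$, the weight hanging at $l$ on the side away from $T^*$ is at most $k$, so only the branch toward $T^*$ can satisfy $c(T[lm; m]) + c(l) > k$, leaving room for at most one such quadruple. Your detailed verification that $c(T^{(r)}_l) \leq k$ (which the paper dispatches with ``by the definition of the support subtree'') is a correct elaboration of the same idea rather than a different argument.
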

\begin{proof}
	As $Q_{x_1, y_1}$ and $Q_{x_2, y_2}$ are distinct overload-discharge quadruples, by the choice of maximality of elements of $\mathcal{Q}$, $y_1$ must be different from $y_2$. Hence we have distinct indices $i, j \in \{1, \dots, d_T(l)\}$ such that $y_1 = w_{l, i + 1}$ and $y_2 = w_{l, j + 1}$. Note that $y_f$ ($f = 1, 2$) is the only vertex in $[x_f, y_f]$ with $\rho(y_f) = l$ since $l$ is the overloading vertex. It means that $l$ is not in the subtree $\rho([x_f, {y_f}^-])$ ($f = 1, 2$). Moreover, the subtree $\rho([x_1, {y_1}^-])$ is a subtree of $T[e_{l, i}; \rho({y_1}^-)]$, i.e. the component not containing $l$ when deleting the edge $e_{l, i}$, and similarly, $\rho([x_2, {y_2}^-])$ is a subtree of the component not containing $l$ when deleting the edge $e_{l, j}$. Therefore $V(\rho([x_1, {y_1}^-])) \cap V(\rho([x_2, {y_2}^-])) = \emptyset$ and $V(\rho([x_1, y_1])) \cap V(\rho([x_2, y_2])) = \{l\}$. 
	
	Suppose $l \notin V(T^*)$. Let $u \in V(T^*)$ and $w$ be the neighbor of $l$ such that $u \in T[lw; w]$. By the definition of the support subtree, for every neighbor $v$ of $l$ other than $w$, we have $c(T[lv; v]) + c(v) \leq k$. By Lemma~\ref{lem:olvertex}, there are at most one overloading subtree whose overloading vertex is $l$.
\end{proof}

\begin{lemma} \label{lem:hodotiu}
	Let $T_0$ be a subtree of $T$. Let $l$ be an overloading vertex shared by $t \in \mathbb{N}$ quadruples $(S_i, l, M_i, n_i) \in \mathcal{Q}$, for $i = 1, \dots, t$, such that $S_i$ is a subtree of $T_0$ for every $i = 1, \dots, t$. We have \begin{align*}
		\sum_{i = 1}^t |M_i \cap V_1| &\leq c(l) + (t - 2)(c(l) - k - 1) + c(T_0) - 2k - 2 \leq c(T_0) - k - 1.
	\end{align*}
\end{lemma}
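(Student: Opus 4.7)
The proof plan is to combine the per-quadruple weight bound from Observation~\ref{obs:1bound} with the near-disjointness provided by Lemma~\ref{lem:overlap}, and then verify the two claimed inequalities by direct algebraic manipulation.

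First, for each $i \in \{1, \dots, t\}$, Observation~\ref{obs:1bound} applied to $(S_i, l, M_i, n_i)$ gives
\begin{align*}
|M_i \cap V_1| \leq c(S_i) - (k + 1).
\end{align*}
Summing over $i$ yields $\sum_{i=1}^t |M_i \cap V_1| \le \sum_{i=1}^t c(S_i) - t(k+1)$, so the whole task reduces to bounding $\sum_{i=1}^t c(S_i)$ from above.

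Next, since $l$ is a common overloading vertex of all $t$ quadruples, Lemma~\ref{lem:overlap} applies pairwise and tells us $V(S_i) \cap V(S_j) = \{l\}$ for all $i \neq j$. In other words the subtrees $S_i$ share only the vertex $l$, so the vertex sets $V(S_i) \setminus \{l\}$ are pairwise disjoint and each contained in $V(T_0) \setminus \{l\}$. This gives the key estimate
\begin{align*}
\sum_{i=1}^t c(S_i) \;=\; t\, c(l) + \sum_{i=1}^t c(S_i - l) \;\le\; t\, c(l) + c(T_0 - l) \;=\; c(T_0) + (t-1)\, c(l).
\end{align*}

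Plugging this into the inequality from the first step yields
\begin{align*}
\sum_{i=1}^t |M_i \cap V_1| \;\le\; c(T_0) + (t-1)\, c(l) - t(k+1),
\end{align*}
and a straightforward rewrite shows the right-hand side equals exactly $c(l) + (t-2)(c(l) - k - 1) + c(T_0) - 2k - 2$, which is the first bound claimed. For the second inequality, subtracting $c(T_0) - k - 1$ from the rewritten expression reduces the problem to checking $(t-1)\, c(l) \le (t-1)(k+1)$, which follows immediately from $c(l) \le k$ (a standing consequence of the hypothesis $c(v) \le k$ for all $v \in V(T)$).

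No real obstacle is anticipated here: the substantive structural work was already done in Lemma~\ref{lem:overlap} (disjointness off $l$) and Observation~\ref{obs:1bound} (per-quadruple leaf-count bound); the present lemma is essentially bookkeeping. The only small care needed is that the stated bound degenerates correctly in the edge cases $t = 1$ and $t = 2$, which one verifies by substitution.
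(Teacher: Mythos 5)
Your proof is correct and follows essentially the same route as the paper's: Observation~\ref{obs:1bound} per quadruple, the disjointness-off-$l$ from Lemma~\ref{lem:overlap} to bound $\sum_i c(S_i)$ by $c(T_0)+(t-1)c(l)$, and the same algebraic rewriting together with $c(l)\leq k$ for the final inequality. No gaps.
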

\begin{proof}
	By Lemma~\ref{lem:overlap}, the overloading subtrees share only the overloading vertex $l$, hence $c(l) + \sum_i (c(S_i) - c(l)) \leq c(T_0)$. By Observation~\ref{obs:1bound} and the assumption $c(v) \leq k$ for all $v \in V(T)$, we have $\sum_i |M_i \cap V_1| \leq \sum_i (c(S_i) - (k + 1)) = \sum_i (c(S_i) - c(l)) + t (c(l) - (k + 1)) \leq c(T_0) - c(l) + t (c(l) - (k + 1)) = c(T_0) + c(l) - 2(k + 1) + (t - 2)(c(l) - k - 1) = c(T_0) - k - 1 + (t - 1)(c(l) - k - 1) \leq c(T_0) - k - 1$.
\end{proof}

As the last preparation for the proof of Lemma~\ref{lem:ksubtree} we show that a reasonable portion of vertices will be covered by the discharges from $\mathcal{Q}$.

\begin{lemma} \label{lem:covering41}
	If $|V(T^*)| > 1$, then we have $$\bigcup_{(S, l, M, n) \in \mathcal{Q}} (M \cup \{n\}) = V(T).$$ If $|V(T^*)| = 1$ and $N_2 \geq 2k - 2g - D$ for some $D \in \mathbb{N}$, then we have $|\bigcup_{(S, l, M, n) \in \mathcal{Q}} (M \cup \{n\})| \geq |V(T)| - D$. If $|V(T^*)| = 1$ and $N_2 \geq 2k - 2g$, then we have $$\bigcup_{(S, l, M, n) \in \mathcal{Q}} (M \cup \{n\}) \supseteq V(T) - V(T^*).$$
\end{lemma}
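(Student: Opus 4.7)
Each of the three parts reduces, via Lemma~\ref{lem:indischarge}, to showing that a vertex $v$ has some neighbor $w$ with $c(T[vw;w])\geq k-g$. I will also use the consequence of Assumption~\ref{ass:ksubtree} that any subtree of weight at most $k$ has weight at most $k-g$.

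For part~(1), $|V(T^*)|>1$ forces $T^*$ to have at least two leaves, each a support vertex $r'$ with $c(T^*[r'])\geq k+1$. For every $v\in V(T)$ I would pick a neighbor $w$ so that $T[vw;w]$ contains such a $T^*[r']$: if $v\in V(T^*)$, any $T^*$-edge at $v$ works because the $w$-side of $T^*-vw$ still contains a leaf of $T^*$; if $v\notin V(T^*)$, take $w$ to be the neighbor of $v$ along the path inside $T^*[\tilde r]$ toward the unique $\tilde r\in V(T^*)$ with $v\in V(T^*[\tilde r])$, so that $T[vw;w]$ contains all of $T^*$ and hence $T^*[r']$ for any leaf $r'\neq\tilde r$. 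Either way $c(T[vw;w])\geq k+1>k-g$.

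For part~(3), for $v\neq r$ let $w_0$ be the parent of $v$ in $T^{(r)}$; since $v$ is a descendant of some child $u$ of $r$, $c(T^{(r)}_v)\leq c(T^{(r)}_u)\leq k$, hence $\leq k-g$ by Assumption~\ref{ass:ksubtree}. Then $c(T[vw_0;w_0])=N_2-c(T^{(r)}_v)\geq N_2-(k-g)\geq k-g$ using $N_2\geq 2k-2g$, and Lemma~\ref{lem:indischarge} finishes.

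Part~(2) is the main obstacle. Let $U^{**}$ be the set of vertices $v$ for which every neighbor $w$ satisfies $c(T[vw;w])<k-g$; by Lemma~\ref{lem:indischarge} the uncovered set is contained in $U^{**}$. Put $D_0:=\max(1,\,2k-2g-N_2)\leq D$; the parent-edge computation shows $v\neq r$ lies in $U^{**}$ iff $c(T^{(r)}_v)>N_2-k+g$ and every child $w$ of $v$ satisfies $c(T^{(r)}_w)<k-g$, which combined with $c(T^{(r)}_v)\leq k-g$ makes $\phi(v):=c(T^{(r)}_v)-(k-g-D_0)$ lie in $\{1,\dots,D_0\}$, while $r\in U^{**}$ iff every child $u$ of $r$ satisfies $c(T^{(r)}_u)\leq k-g-1$. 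Writing $U^*:=U^{**}-\{r\}$, the essential difficulty is that $U^*$ may fail to be an antichain of $T^{(r)}$, so I would organize $U^*$ as a laminar forest in which the parent of $v\in U^*$ is its nearest $U^*$-ancestor in $T^{(r)}$, and prove by induction on the laminar subtree at $v$ that it contains at most $\phi(v)$ elements of $U^*$. The inductive step exploits that the laminar children $v_1,\dots,v_p$ of $v$ have $T^{(r)}$-subtrees pairwise disjoint and disjoint from $\{v\}$, giving $\sum_i c(T^{(r)}_{v_i})\leq c(T^{(r)}_v)-1$, which combined with $k-g-D_0\geq 0$ (a consequence of $N_2\geq k+1$) yields $1+\sum_i\phi(v_i)\leq\phi(v)$. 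Finally I would sum the bound over the roots of the laminar forest using the pairwise disjointness of their $T^{(r)}$-subtrees, and when $r\in U^{**}$ sharpen to $\phi(v)\leq D_0-1$ for every $v\in U^*$ (because then $c(T^{(r)}_v)\leq c(T^{(r)}_u)\leq k-g-1$ for the child $u$ of $r$ from which $v$ descends); a short case analysis on the number of roots then delivers $|U^*|\leq D_0$ if $r\notin U^{**}$ and $|U^*|\leq D_0-1$ if $r\in U^{**}$, so in either case $|U^{**}|\leq D_0\leq D$, as desired.
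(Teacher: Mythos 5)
Your handling of the first and third claims is essentially the paper's own argument: for $|V(T^*)|>1$ you route every $v$ to a neighbour $w$ whose far side swallows a whole $T^*[r']$ of weight at least $k+1$, and for $N_2\geq 2k-2g$ you use the parent-edge computation $c(T[vw;w])=N_2-c(T^{(r)}_v)\geq k-g$; both are exactly what the paper does, and both correctly feed into Lemma~\ref{lem:indischarge}. The middle claim is where you genuinely diverge. The paper takes the ancestor-closed set $U=\{v: c(T^{(r)}_v)\geq N_2-k+g+1\}$, observes that everything outside $U\cup\{r\}$ is covered, and bounds $|U|$ by a two-case analysis of the subtree $T[U]$: if $T[U]$ has two leaves other than $r$, a direct weight count on the two disjoint subtrees below them gives $|U|\leq 2k-2g-N_2\leq D$; otherwise $T[U]$ is a path $rv_1\cdots v_t$, and $t>D$ would force $c(T^{(r)}_{v_1})\geq k-g+1$, contradicting that $r$ is the only support vertex (the boundary case $t=D$ being absorbed by showing $r$ itself is then covered). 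You instead bound the exact set $U^{**}$ of potentially uncovered vertices via the potential $\phi(v)=c(T^{(r)}_v)-(k-g-D_0)$ and an induction over the laminar forest of nearest-$U^*$-ancestors. I checked the key steps: the inductive inequality $1+\sum_i\phi(v_i)\leq\phi(v)$ does follow from the pairwise disjointness of the $T^{(r)}_{v_i}$ inside $T^{(r)}_v-v$ together with $k-g-D_0\geq 0$; the root summation for $q\geq 2$ roots gives $\sum_j\phi(w_j)\leq N_2-1-q(N_2-k+g)\leq D_0-1$ using $N_2>k$; and the sharpening $\phi(v)\leq D_0-1$ when $r\in U^{**}$ settles the remaining case $q=1$. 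So your proof is correct. What the two approaches buy: the paper's dichotomy is shorter and exploits that $U$ is an up-set so that $T[U]$ is a single subtree rooted at $r$; your laminar argument is longer but is insensitive to whether the bad set forms an antichain and yields the slightly sharper, $D$-free conclusion $|U^{**}|\leq\max(1,\,2k-2g-N_2)$, of which the stated bound $|V(T)|-D$ is an immediate consequence.
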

\begin{proof}
	If $|V(T^*)| > 1$, for a vertex $v$ in $T$, we can take a support vertex $u \neq v$ such that $v \notin T^*[u]$. Let $w$ be the vertex adjacent to $v$ such that $u \in T[vw, w]$. We have $c(T[vw; w]) \geq c(T^*[u]) \geq k + 1 \geq k - g$, and hence, by Lemma~\ref{lem:indischarge}, there exists $(S, l, M, n) \in \mathcal{Q}$ such that $v \in M \cup \{n\}$. Thus $\bigcup_{(S, l, M, n) \in \mathcal{Q}} (M \cup \{n\}) = V(T)$.
	
	If $|V(T^*)| = 1$ and $N_2 \geq 2k - 2g - D$ for some $D \in \mathbb{N}$, let $r$ be the only one support vertex. Consider the tree $T^{(r)}$ rooted at $r$. Let $U$ be the set of vertices $v$ with $c(T^{(r)}_v) \geq N_2 - k + g + 1$. For $v \in V(T) - r - U$, let $w$ be the parent of $v$ in $T^{(r)}$, we have $c(T[vw; w]) \geq N_2 - (N_2 - k + g) = k - g$ and hence, by Lemma~\ref{lem:indischarge}, $v$ is covered by some discharge from $\mathcal{Q}$. We can assume that $U$ is not empty (otherwise at most one vertex, namely the root $r$, can be not covered by any discharge from $\mathcal{Q}$).
	
	We consider the subtree $T[U]$ of $T$ induced by $U$. If $T[U]$ has two leaves $v, w$ other than $r$, then $|U| \leq 2 + c(T[U] - v - w) \leq 2 + N_2 - c(T^{(r)}_v) - c(T^{(r)}_w) \leq 2 + N_2 - 2(N_2 - k + g + 1) = - N_2 + 2k - 2g \leq D$. Otherwise $T[U]$ is a path with $r$ as one of the endvertices, say $r v_1 v_2 \dots v_t$ for some integer $t \geq 0$. If $t > D$, then $c(T^{(r)}_{v_{1}}) \geq \sum_{i = 1}^{t - 1} c(v_{i}) + c(T^{(r)}_{v_t}) \geq D + c(T^{(r)}_{v_t}) \geq D + (N_2 - k + g + 1) \geq k - g + 1$ which contradicts the definition of the support subtree $T^*$ as in this case $v_1$ should be in $V(T^*)$. If $t = D$, similarly as above, we have $c(T^{(r)}_{v_1}) \geq k - g$ and hence, by Lemma~\ref{lem:indischarge}, $r$ is covered by some discharge from $\mathcal{Q}$. In any case, we have that there are at most $D$ vertices which are not covered by any discharge from $\mathcal{Q}$, i.e. $|\bigcup_{(S, l, M, n) \in \mathcal{Q}} (M \cup \{n\})| \geq |V(T)| - D$.
	
	If $|V(T^*)| = 1$ and $N_2 \geq 2k - 2g$, let $r$ be the only support vertex and $r \neq v \in V(T)$ be a vertex in $T$. By the definition of the support subtree, we have that $c(T^{(r)}_v) \leq k - g$. Let $w$ be the parent of $v$ in $T^{(r)}$. We have $c(T[vw; w]) = N_2 - c(T^{(r)}_v) \geq (2k - 2g) - (k - g) = k - g$. Therefore $V(T) - r \subseteq \bigcup_{(S, l, M, n) \in \mathcal{Q}} (M \cup \{n\})$.
\end{proof}

\subsection{Proof of Lemma~\ref{lem:ksubtree}} \label{subsec:proof}

In this section we prove Lemma~\ref{lem:ksubtree}. We first consider the case that $N_2 \geq 2k - 2g$. If $|V(T^*)| = 1$, let $r$ be the only support vertex. If $c(r) < g + 1$, then by Lemmas~\ref{lem:covering41} and~\ref{lem:overlap} and the condition that $g + h > 2$, we have $|V_1| \leq \sum_{(S, l, M, n) \in \mathcal{Q}} |M \cap V_1| + 1 \leq \sum_{(S, l, M, n) \in \mathcal{Q}} (c(l) - g - 1) + 1 \leq \sum_{i \geq g + 1} (i - g - 1)|V_i| + 1 < \sum_{i \geq g + 1} (i - g - 1)|V_i| + 2g + h - 2$. Otherwise, $c(r) \geq g + 1$ and $r$ can be an overloading vertex and we apply Lemmas~\ref{lem:hodotiu} (take $T_0 := T$) to bound the corresponding discharges as follows: $|V_1| \leq \sum_{(S, l, M, n) \in \mathcal{Q}, l \neq r} |M \cap V_1| + \sum_{(S, l, M, n) \in \mathcal{Q}, l = r} |M \cap V_1| \leq \sum_{(S, l, M, n) \in \mathcal{Q}, l \neq r} (c(l) - g - 1) + \max \{0, c(r) - g - 1, c(r) + N_2 - 2k - 2\} \leq \sum_{(S, l, M, n) \in \mathcal{Q}, l \neq r} (c(l) - g - 1) + c(r) + g + h - 4 \leq \sum_{i \geq g + 1} (i - g - 1)|V_i| + 2g + h - 3$. The third inequality follows from the condition that $N_2 \leq 2k + g + h - 2$. In any case the inequality~(\ref{for:1}) is contradicted.

	If $|V(T^*)| > 1$, then, by Lemma~\ref{lem:covering41}, all vertices in $V_1$ are covered by some discharge from $\mathcal{Q}$. For a vertex $u \in V(T^*)$, by Lemma~\ref{lem:hodotiu} (take $T_0 := T^*[u]$) and Observation~\ref{obs:1bound}, we have $$\sum_{(S, u, M, n) \in \mathcal{Q}} |M \cap V_1| \leq \max \{0, c(T^*[u]) - k - 1\} + d_{T^*}(u) \max \{0, c(u) - g - 1\}.$$ Define $U_1$ to be the set of vertices $u \in V(T^*)$ satisfying $d_{T^*}(u) = 1$, $U_2$ the set of vertices $u \in V(T^*)$ satisfying $d_{T^*}(u) > 1$ and $c(T^*[u]) \geq k + 1$, and $U_3$ the set of vertices $u \in V(T^*)$ satisfying $c(u) \geq g + 1$. Recall that $U_1$ is exactly the set of support vertices and $c(T^*[u]) \geq k + 1$ for all $u \in U_1$. As $U_1$ is disjoint with $U_2$, we have $N_2 \geq \sum_{u \in U_1 \cup U_2} c(T^*[u]) + \sum_{u \in U_3 - (U_1 \cup U_2)} c(u)$, and
	\begin{align*}
		&\sum_{(S, u, M, n) \in \mathcal{Q}, u \in V(T^*)} |M \cap V_1|\\
		\leq &\sum_{u \in U_1 \cup U_2} (c(T^*[u]) - k - 1) + \sum_{u \in U_3} d_{T^*}(u) (c(u) - g - 1)\\
		= &\sum_{u \in U_1 \cup U_2} (c(T^*[u]) - k - 1) + \sum_{u \in U_3} (c(u) - g - 1) + \sum_{u \in U_3 - U_1} (d_{T^*}(u) - 1) (c(u) - g - 1)\\
		\leq &\sum_{u \in U_1 \cup U_2} c(T^*[u]) + \sum_{u \in U_3 - (U_1 \cup U_2)} c(u) + \sum_{u \in U_3} (c(u) - g - 1) + \sum_{u \in U_1} (- k - 1)\\
		&+ \sum_{u \in U_3 \cap U_2} ((d_{T^*}(u) - 1) (c(u) - g - 1) - k - 1) + \sum_{u \in U_3 - (U_1 \cup U_2)} ((d_{T^*}(u) - 1) (c(u) - g - 1) - c(u))\\
		\leq & \hspace{0.06cm}N_2 + \sum_{u \in U_3} (c(u) - g - 1) + \sum_{u \in U_3 - U_1} (d_{T^*}(u) - 2)(- k - 1) + 2(- k - 1)\\
		&+ \sum_{u \in U_3 - U_1} (d_{T^*}(u) - 2) (c(u) - g - 1)\\
		\leq &\sum_{u \in U_3} (c(u) - g - 1) + N_2 - 2k - 2.
	\end{align*} In the third inequality we utilize the basic fact about tree that $\sum_{u \in U_1} 1 = \sum_{u \in U_1} d_{T^*}(u) = \sum_{u \in V(T^*) - U_1} (d_{T^*}(u) - 2) + 2$.
	Thus we have \begin{align*}
		|V_1| &\leq \sum_{(S, l, M, n) \in \mathcal{Q}, l \notin V(T^*)} |M \cap V_1| + \sum_{(S, l, M, n) \in \mathcal{Q}, l \in V(T^*)} |M \cap V_1|\\
		&\leq \sum_{(S, l, M, n) \in \mathcal{Q}, l \notin V(T^*)} (c(l) - g - 1) + \sum_{l \in U_3} (c(l) - g - 1) + N_2 - 2k - 2\\
		&\leq \sum_{i \geq g + 1} (i - g - 1)|V_i| + g + h - 4,
	\end{align*} which contradicts the inequality~(\ref{for:1}).

	We now consider the case that $N_2 < 2k - 2g$. Note that in this case $|V(T^*)| = 1$ always holds. Let $r$ be the only support vertex. Set $D := 2g + h - 3 > 0$ in Lemma~\ref{lem:covering41}, we have
	\begin{align*}
		|V_1| &\leq \sum_{(S, l, M, n) \in \mathcal{Q}, l \neq r} (c(l) - g - 1) + \max \{0, c(r) - g - 1, c(r) + N_2 - 2k - 2\} + D\\
		&\leq \sum_{(S, l, M, n) \in \mathcal{Q}, l \neq r} (c(l) - g - 1) + \max \{0, c(r) - g - 1, c(r) - 2g - 3\} + 2g + h - 3\\
		&\leq \sum_{i \geq g + 1} (i - g - 1)|V_i| + 2g + h - 3,
	\end{align*} which contradicts the inequality~(\ref{for:1}).
	
	Thus it is proved the existence of a subtree $S$ with weight $k - g + 1 \leq c(S) \leq k$. As Assumption~\ref{ass:ksubtree} cannot hold, it is not hard to see that the subtree $S$ can be found by the iterative overload-discharge process described in Algorithm~\ref{alg:overloaddischarge}. The cycle $C_T$ and the mapping $\rho$ can be constructed in $O(N_1)$ time~\cite{Hierholzer1873}. Note that there are $O(|V(C_T)|)$, i.e. $O(N_1)$ iterations, since the initial vertex $v \in V(C_T)$ can be revisited at most once. Thus $S$ can be computed in $O(N_1)$ time. This completes the proof of Lemma~\ref{lem:ksubtree}.
	
	For instance, if we set $s := x$ in the example given in Figure~\subref*{subfig:overloadingsubtree}, Algorithm~\ref{alg:overloaddischarge} will output the subtree $\rho([z, w])$.
	
	We remark that here we assume that each arithmetic operation  be done in constant time. If the arithmetic operations require logarithmic cost, then one can have $O(\Delta(T) \cdot N_1 \log \frac{N_2}{N_1})$ running time, where $\Delta(T)$ denotes the maximum degree of $T$.
	
	\smallskip

\begin{algorithm}[H] \label{alg:overloaddischarge}
	\SetAlgoCaptionSeparator{}
	\caption{}
	\DontPrintSemicolon
	\SetKw{KwGoTo}{go to}
	\KwIn{A tree $T$ of $N_1$ vertices and vertex weights $c : V(T) \rightarrow \mathbb{N}$ with $c(T) = N_2$ such that $1 \leq k \leq N_2$, $g + h > 2$ and $2k - 4g - h + 3 \leq N_2 \leq 2k + g + h - 2$ ($k, g, N_1, N_2 \in \mathbb{N}$), where $h := 2N_1 - N_2$, and $c(v) \le k$ for any $v \in V(T)$.}
	\KwOut{A subtree $S$ of $T$ with $k - g + 1 \leq c(S) \leq k$.}
	Construct the directed cycle $C_T$ and the homomorphism $\rho: V(C_T) \rightarrow V(T)$. Choose an arbitrary vertex $v$ of $C_T$. Set $s:= v$ and $t := v$.\\
	\While{$c([s, t]) < k - g + 1$}{ \label{od}
		Set $t := t^+$.
	}
	\If{$c([s, t]) \leq k$}{
		Output $\rho([s, t])$.
	}
	\While{$c([s, t]) > k$}{ 
		Set $s := s^+$.
	}
	\If{$c([s, t]) \geq k - g + 1$}{
		Output $\rho([s, t])$.
	}
	\KwGoTo \scriptsize \bfseries \ref{od}.\\
\end{algorithm}

\subsection{Some Examples} \label{sec:examples}

In this section we give some examples and show that the conditions in Lemma~\ref{lem:ksubtree} are tight from several aspects. A useful fact to study some examples mentioned below is that Algorithm~\ref{alg:overloaddischarge} is an exhaustive search when the input tree is a path.

The condition $1 \le k \le N_2$ should clearly be included for our interest.

We show that the condition $g + h > 2$ is tight. Consider the \emph{star} $T$ of order $2p$ for some $p > 1$, such that center vertex has weight 1 and the other $2p - 1$ vertices have weight 2. We have $N_2 = 2N_1 - 1 = 4p - 1$ and $h = 1$. Set $k := N_1 = 2p \ge 4$ and $g := 1$. One can easily check that all conditions are satisfied except that $g + h = 2$, and $T$ has no subtree of weight $k$.

For the condition $N_2 \ge 2k - 4g - h + 3$, we consider the path $v_1 v_2 \dots v_{p + 2q}$ of order $p + 2q$ for some integers $p > 1$ and $q \ge 1$. Set $c(v_{q + i}) := 1$ for $i = 1, 2, \dots, p$, and $c(v_j) := 2$ for any $j \neq q + 1, q + 2, \dots, q + p$. We have $N_2 = p + 4q$ and $h = p$. Set $k := p + 2q + 1$ and $g := 1$. One can easily check that all conditions are satisfied except that $N_2 = 2k - 4g - h + 2$, and $T$ has no subtree of weight $k$.

We next discuss the condition $N_2 \le 2k + g + h - 2$. Let $p > 1$ be an integer, and $T$ be a path $v_1 v_2 \dots v_{2p + 3}$ of order $2p + 3$. Set $c(v_{p + 2}) := p + 2$, $c(v_{p + i}) := 2$ for $i = 1, 3$, and $c(v_j) := 1$ for any $j \neq p + 1, p + 2, p + 3$. We have $N_2 = 3p + 6$ and $h = 2N_1 - N_2 = p$. Set $k := p + 3$ and $g := 1$. One can easily check that all conditions are satisfied except that $N_2 = 2k + g + h - 1$, and $T$ has no subtree of weight $k$.

As we have seen, the condition $c(v) \le k$ for all $v \in V(T)$ is one of the key ingredients to make the overload-discharge process work. If this condition is violated, then the existence of a subtree of the desired weight cannot be assured. Let $T$ be the star of order $p + 1$ for some integer $p > 1$. We set the vertex weight of the center vertex to be $q + 1$ for some integer $2 < q < p + 2$, and those of the other $p$ vertices (leaves) to be 1. We have $N_2 = p + q + 1$ and $h = p - q + 1$. Set $k := q$ and $g := 2$. Then it is clear that all conditions are satisfied except that there exists a vertex (the center vertex) with weight larger than $k$, and $T$ has no subtree of weight between $k - g + 1$ and $k$.

\section{Cycle Spectra of Planar Graphs} \label{sec:cycle}

The \emph{cycle spectrum} ${CS}(G)$ of a graph $G$ is defined to be the set of integers $k$ for which there is a cycle of length $k$ in $G$. $G$ is said to be \emph{hamiltonian} if $|V(G)| \in CS(G)$ and \emph{pancyclic} if its cycle spectrum has all possible lengths, i.e. $CS(G) = \{3, \dots, |V(G)|\}$.
Cycle spectra of graphs have been extensively studied in many directions, in this paper we study cycle spectra of planar hamiltonian graphs with minimum degree $\delta \geq 4$. We first give an overview of previous results in Section~\ref{subsec:cycleconj} and present our results in Sections~\ref{sec:Mohr} and~\ref{sec:halfminus1}.

\subsection{An Overview} \label{subsec:cycleconj}

In 1956, Tutte~\cite{Tutte1956} proved his seminal result that every $4$-connected planar graph is hamiltonian. Motivated by Tutte's theorem together with the metaconjecture proposed by Bondy~\cite{Bondy1975} that almost any non-trivial conditions
for hamiltonicity of a graph should also imply pancyclicity, Bondy~\cite{Bondy1975} conjectured in 1973 that every $4$-connected planar graph $G$ is almost pancyclic, i.e. $|CS(G)| \geq |V(G)| - 3$, and Malkevitch~\cite{Malkevitch1988} conjectured in 1988 that every $4$-connected planar graph is pancyclic if it
contains a cycle of length $4$ (see~\cite{Malkevitch1971, Malkevitch1978} for other variants).

These two conjectures remain open, while $4$ is the only known cycle length that can be missing in a cycle spectrum of a $4$-connected planar graph. For example, the line graph of a cyclically $4$-edge-connected cubic planar graph with girth at least $5$ is a $4$-regular $4$-connected planar graph with no cycle of length $4$, see also~\cite{Malkevitch1971, Trenkler1989}. If we relax the connectedness, more cycle lengths are known for being absent in some cycle spectra. Choudum~\cite{Choudum1977} showed that for every integer $k \geq 7$, there exist $4$-regular $3$-connected planar hamiltonian graphs of order larger than $k$ each has cycles of all possible lengths except $k$, which means every integer $k \geq 7$ can be absent in the cycle spectra of some $4$-regular $3$-connected planar hamiltonian graphs. Another interesting example was given by Malkevitch~\cite{Malkevitch1971}, which is, for every $p \in \mathbb{N}$, a $4$-regular planar hamiltonian graph $G$ of order $|V(G)| = 6p$ whose cycle spectrum $CS(G) = \{3, 4, 5, 6\} \cup \{r \in \mathbb{N}: \frac{|V(G)|}{2} \leq r \leq |V(G)|\}$, as shown in Figure~\ref{fig:Malkevitch}.

\begin{figure}[b!] 
	\centering
	\includegraphics[scale = 1]{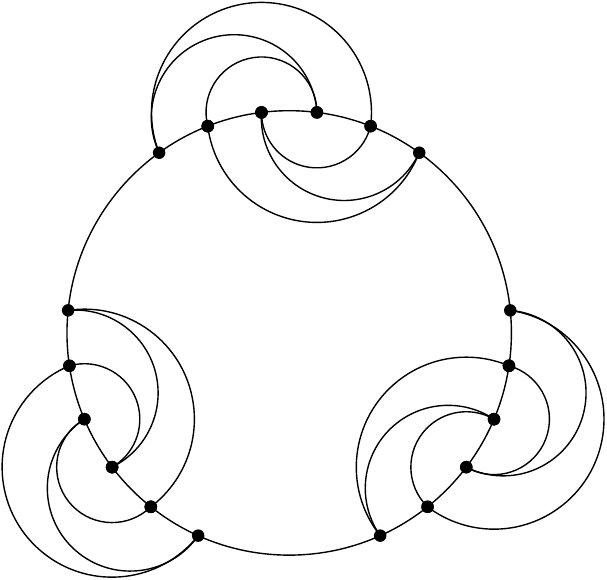}
	\caption{A $4$-regular planar hamiltonian graph $G$ which has no cycle of length between $7$ and $\frac{|V(G)|}{2} - 1$.}
	\label{fig:Malkevitch}
\end{figure}

So far we have seen which cycle lengths can be absent in some cycle spectra, we now ask the opposite question, i.e. which cycle lengths must be present in all cycle spectra. It is known that every planar graph with $\delta \geq 4$ must contain cycles of length $3$, $5$~\cite{Wang2002} and $6$~\cite{Fijavz2002}, which is shown to be best possible by the aforementioned examples. It is also known that every $2$-connected planar graph with $\delta \geq 4$ must have a cycle of length $4$ or $7$~\cite{Hornak2008}, a cycle of length $4$ or $8$ and a cycle of length $4$ or $9$~\cite{Madaras}. While the presence of a cycle of length $3$ follows easily from Euler's formula, the rest of them were shown by the discharging method.

Another powerful tool in searching cycles of specified length is the so-called Tutte path method, which was first introduced by Tutte in his proof of hamiltonicity of 4-connected planar graphs. Using this technique, Nelson (see~\cite{Plummer1975, Thomassen1983}), Thomas and Yu~\cite{Thomas1994} and Sanders~\cite{Sanders1996} showed that every $4$-connected planar graph contains cycles of length $|V(G)| - 1$, $|V(G)| - 2$ and $|V(G)| - 3$, respectively. Note that we always assume $k \geq 3$ when we say a graph contains a cycle of length $k$. Chen et al.~\cite{Chen2004} noticed that the Tutte path method cannot be generalized for smaller cycle lengths, they hence combined Tutte paths with contractible edges and showed the existence of cycles of length $|V(G)| - 4$, $|V(G)| - 5$ and $|V(G)| - 6$. Following this approach, Cui et al.~\cite{Cui2009} showed that every $4$-connected planar graph has a cycle of length $|V(G)| - 7$. To summarize, every $4$-connected planar graph contains a cycle of length $k$ for every $k \in \{|V(G)|, |V(G)| - 1, \dots, |V(G)| - 7\}$ with $k \geq 3$.

\subsection{Cycles of Length Close to Medium-Length} \label{sec:Mohr}

With the knowledge of these short and long cycles, Mohr \cite{Mohr2018} asked whether cycles of length close to $\frac{|V(G)|}{2}$ also exist, and he answered his question by showing that every planar hamiltonian graph $G$ satisfying $|E(G)| \geq 2|V(G)|$ has a cycle of length between $\frac{1}{3}|V(G)|$ and $\frac{2}{3}|V(G)|$. We present his simple and elegant argument in the following.

Let $G^*$ be the dual graph of the plane graph $G$ and $C$ be a Hamilton cycle of $G$. Note that $C$ separates the Euclidean plane into two open regions $C_{\textrm{int}}$ and $C_{\textrm{ext}}$ containing no vertex. Let $G_{\textrm{int}}$ and $G_{\textrm{ext}}$ be the graphs obtained from $G$ by deleting the edges in $C_{\textrm{ext}}$ and in $C_{\textrm{int}}$, respectively. We always assume that $|E(G_{\textrm{int}})| \geq |E(G_{\textrm{ext}})|$. As $C$ is a Hamilton cycle, its dual disconnects $G^*$ into two trees say $T_{\textrm{int}}$ lying on $C_{\textrm{int}}$ and $T_{\textrm{ext}}$ on $C_{\textrm{ext}}$ (see Figure~\ref{fig:transformation}). By Euler's formula, we have $|V(G^*)| = |E(G)| - |V(G)| + 2$, and hence $|V(T_{\textrm{int}})| \geq \frac12|V(G^*)| \geq \frac{1}{2}|V(G)| + 1$. We define vertex weight $c(v) := d_{G^*}(v) - 2 \geq 1$ for every vertex $v \in V(G^*) \supset V(T_{\textrm{int}})$, where $d_{G^*}(v)$ is the degree of $v$ in $G^*$, or equivalently, the face length of $v$ in $G$ (see Figure~\subref*{subfig:vertexweight}). It is not hard to see that for every subtree $S$ of $T_{\textrm{int}}$, the set of edges of $G^*$ having exactly one endvertex in $S$ is indeed  the dual of an edge set of a cycle in $G$ of length $c(S) + 2$, where $c(S) := \sum_{v \in V(S)} c(v)$ (see Figure~\subref*{subfig:treeandcycle}).

\begin{figure}[!ht]
	\centering
	\subfloat[The tree $T_{\textrm{int}}$ with vertex weights. \label{subfig:vertexweight}]{%
		\includegraphics[scale=1.2]{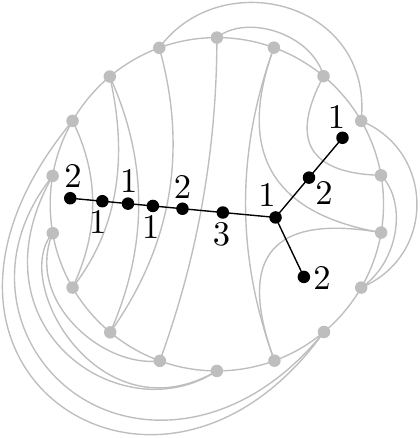}
	}
	\hspace{1.5cm}
	\subfloat[A subtree (red) of $T_{\textrm{int}}$ of weight $6$ corresponds to a cycle (blue) of length $8$ in $G$.\label{subfig:treeandcycle}]{%
		\includegraphics[scale=1.2]{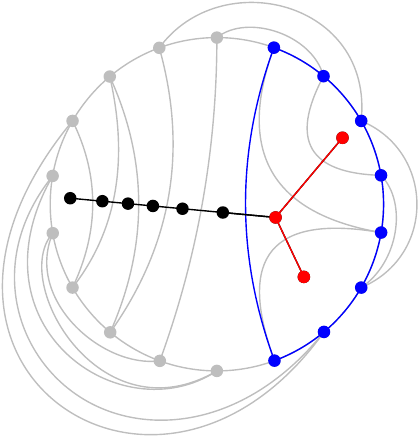}
	}
	\caption{The tree $T_{\textrm{int}}$ (black) in the dual graph of $G$ (grey).}
	\label{fig:transformation}
\end{figure}

Thus the problem is transformed to that of finding a cycle of specified length in a hamiltonian outerplanar graph with sufficient edge density, dually, a subtree of specified weight: the existence of a subtree $S$ of weight $k$ in $T_{\textrm{int}}$ implies the existence of a cycle of length $k + 2$ in $G$. It is left to show that there is a subtree $S$ in $T_{\textrm{int}}$ with $\frac{1}{3}|V(G)| - 2 \leq c(S) \leq \frac{2}{3}|V(G)| - 2$. First note that $c(v) \leq \frac12|V(G)| - 2$ for all $v \in V(T_{\textrm{int}})$; otherwise $c(T_{\textrm{int}}) >\frac12|V(G)| - 2 + |V(T_{\textrm{int}})| - 1 \geq |V(G)| - 2$, which is not possible as $T_{\textrm{int}}$ corresponds to the Hamilton cycle of length $|V(G)|$. If there is a vertex $v \in V(T_{\textrm{int}})$ with $c(v) \geq \frac13|V(G)| - 2$, then we can simply take $S$ to be this single vertex $v$. Suppose $c(v) < \frac13|V(G)| - 2$ for all $v \in V(T_{\textrm{int}})$. We take $S$ to be a maximal subtree of $T_{\textrm{int}}$ with $c(S) \leq \frac23|V(G)| - 2$, it is clear that $c(S) \geq \frac13|V(G)| - 2$. Thus $G$ has a cycle of length between $\frac{1}{3}|V(G)|$ and $\frac{2}{3}|V(G)|$.

We recapitulate the main content of Mohr's proof. Given a planar hamiltonian graph $G$, we can have a tree $T$ (in the dual graph) of at least $\frac12|E(G)| - \frac12|V(G)| + 1$ vertices with vertice weights $c: V(T) \rightarrow \mathbb{N}$ such that $c(T) = |V(G)| - 2$ and $c(v) \leq c(T) - |V(T)| + 1 \leq \frac32|V(G)| - \frac12|E(G)| - 2$ for all $v \in V(T)$. And, if there is a subtree of weight $k$ in $T$, then there is a cycle of length $k + 2$ in $G$. By Lemma~\ref{lem:ksubtree} we have the following.
\begin{theorem} \label{cor:EgeqV}
	Let $G$ be a planar hamiltonian graph with $|E(G)| \geq (2 + \gamma)|V(G)|$ for some real number $-1 \leq \gamma < 1$. Let $k, g \in \mathbb{N}$ such that $g + \lceil \gamma |V(G)| \rceil + 2 > 0$, $3 \leq k \leq |V(G)|$ and $\lfloor \frac{(1 - \gamma)|V(G)|}{2} \rfloor \leq k \leq \frac{\lceil (1 + \gamma)|V(G)| \rceil}{2} + 2g + \frac32$. There exists a cycle $K$ in $G$ of length $k - g + 1 \leq |V(K)| \leq k$, and $K$ can be found in linear time if a Hamilton cycle of $G$ is given.
\end{theorem}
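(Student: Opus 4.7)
The plan is to reduce Theorem~\ref{cor:EgeqV} to Lemma~\ref{lem:ksubtree} via the dual-graph transformation described just before the statement. Given a Hamilton cycle of the plane graph $G$, I take $T$ to be whichever of the two trees $T_{\textrm{int}}$, $T_{\textrm{ext}}$ cut out of $G^*$ has more vertices, and set $c(v) := d_{G^*}(v) - 2 \ge 1$ for $v \in V(T)$. Euler's formula gives $N_1 := |V(T)| \ge \lceil (|E(G)| - |V(G)| + 2)/2 \rceil$ and $N_2 := c(T) = |V(G)| - 2$, and by Mohr's bijection every subtree of $T$ of weight $w$ corresponds to a cycle in $G$ of length $w + 2$. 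So it suffices to exhibit a subtree $S$ of $T$ with $k - g - 1 \le c(S) \le k - 2$.

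I will invoke Lemma~\ref{lem:ksubtree} on $(T, c)$ with its parameters set to $k_0 := k - 2$ and $g_0 := g$. Writing $h := 2 N_1 - N_2$, the edge-density hypothesis $|E(G)| \ge (2 + \gamma)|V(G)|$ combined with Euler's formula yields $h \ge \lceil \gamma |V(G)| \rceil + 4$ (and even one more when $|E(G)| - |V(G)|$ is odd). Under this substitution each hypothesis of the lemma translates into a condition in the language of the theorem: the requirement $1 \le k_0 \le N_2$ becomes $3 \le k \le |V(G)|$; the requirement $g_0 + h > 2$ becomes $g + \lceil \gamma |V(G)| \rceil + 2 > 0$; the pointwise bound $c(v) \le k_0$ is delivered by $c(v) \le N_2 - N_1 + 1$ together with $k \ge \lfloor (1-\gamma)|V(G)|/2 \rfloor$; the inequality $2 k_0 - 4 g_0 - h + 3 \le N_2$ becomes the theorem's upper bound $k \le \frac{\lceil (1+\gamma)|V(G)|\rceil}{2} + 2g + \frac{3}{2}$; and $N_2 \le 2 k_0 + g_0 + h - 2$ becomes the theorem's lower bound $k \ge \lfloor (1-\gamma)|V(G)|/2 \rfloor$. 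Applying the lemma therefore produces $S \subseteq T$ with $k - g - 1 \le c(S) \le k - 2$, and the corresponding cycle $K$ in $G$ has length between $k - g + 1$ and $k$.

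For the linear-time claim, a planar embedding is computable in linear time by Hopcroft--Tarjan, and from it together with the given Hamilton cycle one can build $G^*$, the two trees, and the weights $c$ in $O(|V(G)| + |E(G)|) = O(|V(G)|)$ time (using $|E(G)| \le 3|V(G)| - 6$). Algorithm~\ref{alg:overloaddischarge} then returns $S$ in an additional $O(N_1) = O(|V(G)|)$ time, and the edge set of $K$ is read off from $S$ in linear time. The one delicate aspect of the argument is the parity bookkeeping in the five translations above: the floors and ceilings on $\frac{(1 \pm \gamma)|V(G)|}{2}$ as well as the extra $\frac{3}{2}$ in the upper bound on $k$ are tuned precisely so that neither the parity of $|V(G)|$ nor that of $|E(G)| - |V(G)|$ opens a one-unit gap in any of the inequalities, and I expect this verification, though mostly clerical, to be the trickiest step.
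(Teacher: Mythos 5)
Your proposal is correct and follows essentially the same route as the paper's proof: the same dual-tree reduction with $T = T_{\textrm{int}}$, the same substitution $\tilde{k} := k - 2$, the same lower bound $2N_1 - N_2 \geq \lceil \gamma |V(G)| \rceil + 4$ (using that the lemma's three conditions are monotone in $h$), and the same vertex-weight bound $c(v) \leq N_2 - N_1 + 1$. The five translations you list, including the parity bookkeeping you flag as the delicate step, are exactly the inequalities the paper verifies, and they all check out (the key points being $|V(G)| + \lceil \gamma|V(G)|\rceil = \lceil (1+\gamma)|V(G)|\rceil$ and $g \geq 1$ absorbing the half-unit loss in the lower bound on $k$).
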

\begin{proof}
	Let $T$ be the tree with vertex weights $c$ that we mentioned before. We set $\tilde{k} := k - 2 \geq 1$ and $h := \lceil \gamma |V(G)| \rceil + 4$. We check the conditions required for applying Lemma~\ref{lem:ksubtree} on the parameters $\tilde{k}, g, h, N_1$ and $N_2$ as follows. First we have that $g + h > 2$, $1 \leq \tilde{k} \leq N_2 = |V(G)| - 2$, $2\tilde{k} \leq \lceil (1 + \gamma) |V(G)| \rceil + 4g - 1 = N_2 + 4g + h - 3$, and $\tilde{k} \geq \lfloor \frac{(1 - \gamma) |V(G)|}{2} \rfloor - 2 \geq \frac{\lfloor (1 - \gamma) |V(G)| \rfloor}{2} - \frac12 - 2 = \frac{|V(G)| - \lceil \gamma |V(G)| \rceil}{2} - \frac12 - 2 \geq \frac{N_2}{2} - \frac{g}{2} - \frac{h}{2} + 1$. Note also that $2|V(T)| \geq |E(G)| - |V(G)| + 2 \geq (1 + \gamma) |V(G)| + 2 = c(T) + \gamma |V(G)| + 4$ implies $2N_1 \geq N_2 + h$, and for every $v \in V(T)$, $c(v) \leq \frac32 |V(G)| - \frac12 |E(G)| - 2 \leq \frac32 |V(G)| - \frac{2 + \gamma}{2} |V(G)| - 2 = \frac{1 - \gamma}{2} |V(G)| - 2$ implies $c(v) \leq \lfloor \frac{(1 - \gamma) |V(G)|}{2} \rfloor - 2 \leq \tilde{k}$. As all conditions are satisfied, by Lemma~\ref{lem:ksubtree}, there exists a subtree $S$ of $T$ of weight $\tilde{k} - g + 1 \leq c(S) \leq \tilde{k}$ which can be found in linear time. And hence $G$ has a cycle $K$ of length $k - g + 1 \leq |V(K)| \leq k$ which can be found in linear time provided a Hamilton cycle of $G$ is given, since every planar graph can be embedded in plane in linear time~\cite{Chiba1985} and the tree $T_{\textrm{int}}$ can then be easily constructed from the planar embedding in linear time.
\end{proof}

We specify some implications as follows.

\begin{corollary}
	Every planar hamiltonian graph $G$ with $\delta(G) \geq 3$ has a cycle of length $\lfloor \frac{|V(G)|+1}{4} \rfloor + 2 \leq k \leq \lfloor \frac{3|V(G)|}{4} \rfloor$. Every planar hamiltonian graph $G$ with $\delta(G) \geq 4$ has a cycle of length $k$ for every $k \in \{\lfloor \frac{|V(G)|}{2} \rfloor, \dots, \lceil \frac{|V(G)|}{2} \rceil + 3\}$ with $3 \leq k \leq |V(G)|$. Every planar hamiltonian graph $G$ with $\delta(G) \geq 5$ has a cycle of length $k$ for every $k \in \{\lfloor \frac{|V(G)|}{4} \rfloor, \dots, \lceil \frac{3|V(G)|}{4} \rceil + 3\}$ with $3 \leq k \leq |V(G)|$. Each of these cycles can be found in linear time if a Hamilton cycle of $G$ is given.
\end{corollary}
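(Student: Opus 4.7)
The plan is to derive each bullet as a direct corollary of Theorem~\ref{cor:EgeqV}. The handshake lemma translates the minimum-degree hypothesis $\delta(G)\geq d$ into the edge-count hypothesis $|E(G)| \geq \frac{d}{2}|V(G)| = (2+\gamma)|V(G)|$ with $\gamma := (d-4)/2$, so the three bullets correspond to $\gamma = -\tfrac12,\, 0,\, \tfrac12$, respectively.

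For the second and third bullets, which demand a cycle of \emph{every} integer length in a stated range, I would fix $g := 1$ and invoke Theorem~\ref{cor:EgeqV} once for each admissible $k$; since the theorem returns a cycle of length in $[k-g+1, k]$, the choice $g=1$ yields exactly a cycle of length $k$. The hypotheses $g + \lceil \gamma |V(G)| \rceil + 2 > 0$ and $3 \leq k \leq |V(G)|$ are immediate; the substantive verification is
\[
\left\lfloor \tfrac{(1-\gamma)|V(G)|}{2} \right\rfloor \leq k \leq \tfrac{\lceil (1+\gamma)|V(G)| \rceil}{2} + 2g + \tfrac32.
\]
For $\gamma = 0$ the right-hand side is $|V(G)|/2 + 7/2$, whose integer part is $\lceil |V(G)|/2 \rceil + 3$ in both parities; for $\gamma = 1/2$ it is $\lceil 3|V(G)|/2 \rceil/2 + 7/2$, whose integer part equals $\lceil 3|V(G)|/4 \rceil + 3$ after a short case analysis on $|V(G)| \bmod 4$. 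The lower bounds match tautologically.

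For the first bullet, only the existence of \emph{some} cycle in the wide window $[\lfloor (|V(G)|+1)/4 \rfloor + 2,\; \lfloor 3|V(G)|/4 \rfloor]$ is asserted, so a single application suffices: I would set $k := \lfloor 3|V(G)|/4 \rfloor$ and take $g$ of order $|V(G)|/2$, specifically $g := \lfloor 3|V(G)|/4 \rfloor - \lfloor (|V(G)|+1)/4 \rfloor - 1$, so that the guaranteed interval $[k-g+1, k]$ coincides with the target window. With $\gamma = -1/2$, this choice of $g$ is large enough both to satisfy $g + \lceil -|V(G)|/2 \rceil + 2 > 0$ and to keep $k$ below the upper bound $\lceil |V(G)|/2 \rceil/2 + 2g + \tfrac32$ of Theorem~\ref{cor:EgeqV}.

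The main obstacle is not conceptual but arithmetic: the bulk of the write-up will be a careful case analysis on $|V(G)| \bmod 4$ (and on the parity of $|V(G)|$) confirming that the floor/ceiling quantities produced by Theorem~\ref{cor:EgeqV} collapse to the cleaner expressions printed in the corollary. The linear-time claim is inherited from Theorem~\ref{cor:EgeqV} with no extra work, since only one invocation is needed per desired cycle, on top of the linear-time planar embedding.
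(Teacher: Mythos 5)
Your proposal is correct and matches the paper's proof exactly: the paper also applies Theorem~\ref{cor:EgeqV} with $\gamma = -\tfrac12, 0, \tfrac12$ and $g=1$ for the last two bullets, and for the first bullet it takes $k := \lfloor 3|V(G)|/4 \rfloor$ with $g := \lfloor |V(G)|/2 \rfloor - 1$, which coincides with your choice of $g$ since $\lfloor 3|V(G)|/4 \rfloor - \lfloor (|V(G)|+1)/4 \rfloor = \lfloor |V(G)|/2 \rfloor$.
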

\begin{proof}
	It follows immediately when we in Theorem~\ref{cor:EgeqV} set $\gamma := - \frac12$, $g := \lfloor \frac{|V(G)|}{2} \rfloor - 1$ and $k := \lfloor \frac{3|V(G)|}{4} \rfloor$; $\gamma := 0$ and $g := 1$; and $\gamma := \frac12$ and $g := 1$, respectively.
\end{proof}

It is known that a Hamilton cycle can be found in linear time for every 4-connected planar graph~\cite{Chiba1989}, thus those cycles mentioned above can be simply found in linear time each in this case.

\subsection{3-Connected Planar Hamiltonian Graphs} \label{sec:halfminus1}

Note that Malkevitch's example (see Figure~\ref{fig:Malkevitch}) illustrates that not every planar hamiltonian graph $G$ with $\delta \geq 4$ can have a cycle of length $\lfloor \frac{|V(G)|}{2} \rfloor - 1$ or $\lfloor \frac{|V(G)|}{2} \rfloor - 2$. As a further application we prove in this section that this cycle length can be assured for $3$-connected planar hamiltonian graphs with $\delta \geq 4$.

\begin{theorem}
	Let $G$ be a 3-connected planar hamiltonian graph with minimum degree $\delta(G) \geq 4$. If $|V(G)| \geq 8$ is even, there exists a cycle of length either $\frac12|V(G)| - 2$ or $\frac12|V(G)| - 1$ in $G$, and it can be found in linear time if a Hamilton cycle is given.
\end{theorem}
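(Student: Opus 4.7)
The plan is to apply Mohr's dual-tree transformation from Section~\ref{sec:Mohr} together with Lemma~\ref{lem:ksubtree}. Given the Hamilton cycle $C$, let $T$ denote the larger of the two dual trees $T_{\mathrm{int}}$ and $T_{\mathrm{ext}}$, equipped with the weight function $c(v) := d_{G^*}(v) - 2$; recall that a subtree $S$ of $T$ corresponds to a cycle of $G$ of length $c(S) + 2$. Writing $n := |V(G)|$, we have $N_2 := c(T) = n - 2$, and $\delta(G) \geq 4$ gives $|E(G)| \geq 2n$, hence $N_1 := |V(T)| \geq n/2 + 1$ and $h := 2N_1 - N_2 \geq 4$. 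Since $n$ is even, so is $h$. I would set $k := n/2 - 3$ and $g := 2$, so that any subtree with $n/2 - 4 \leq c(S) \leq n/2 - 3$ produces the desired cycle of length $n/2 - 2$ or $n/2 - 1$.

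The arithmetic hypotheses of Lemma~\ref{lem:ksubtree} are readily verified: $g + h \geq 6 > 2$; $N_2 \leq 2k + g + h - 2$ simplifies to $h \geq 4$; and $2k - 4g - h + 3 \leq N_2$ simplifies to $h \geq -9$. The substantive condition is $c(v) \leq k$ for every $v \in V(T)$, equivalently every face on the chosen side of $C$ has length at most $n/2 - 1$. Because $G$ is $3$-connected, every face is a chordless cycle of length at least $3$, so $c(v) \geq 1$ for all $v$, and summing gives $\max_v c(v) \leq N_2 - (N_1 - 1) = n - 1 - N_1$. If $N_1 \geq n/2 + 2$, i.e., $h \geq 6$, this forces $\max_v c(v) \leq n/2 - 3 = k$, and Lemma~\ref{lem:ksubtree} delivers the required subtree immediately.

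Only the extremal case $h = 4$ remains. Here $|V(T_{\mathrm{int}})| = |V(T_{\mathrm{ext}})| = n/2 + 1$, $G$ is $4$-regular, and the two dual trees play symmetric roles. If one of them still satisfies $c(v) \leq n/2 - 3$ throughout, I apply Lemma~\ref{lem:ksubtree} to that tree. Otherwise each tree contains a face of length exactly $n/2$, call them $f_1$ and $f_2$; the constraints $\sum_v c(v) = n - 2$ and $c(v) \geq 1$ then force every remaining face on both sides of $C$ to be a triangle of $G$.

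The main obstacle is this rigid ``bad-bad'' configuration. Since each triangle face contributes weight $1$, a subtree of $T_{\mathrm{int}}$ consisting of $m$ triangular faces corresponds via Mohr to a cycle of length $m + 2$; hence it suffices to exhibit a connected set of $n/2 - 4$ or $n/2 - 3$ triangles in one of the forests $T_{\mathrm{int}} - f_1$ or $T_{\mathrm{ext}} - f_2$, each of which has $n/2$ triangle-vertices. I would analyse how the triangles cluster around $f_1$ (and symmetrically $f_2$) in terms of the chord pattern on $\partial f_1$: if the largest component of $T_{\mathrm{int}} - f_1$ contains at least $n/2 - 3$ triangles we are done, and similarly for $T_{\mathrm{ext}} - f_2$. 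In the remaining case the components of both forests are forced to be small, and then $3$-connectivity (which guarantees $\partial f_1$ and $\partial f_2$ are chordless $n/2$-cycles in $G$ with constrained interaction with the triangular faces) should permit an explicit shortening of $\partial f_1$ through a single triangle edge to produce a cycle of length $n/2 - 1$ or $n/2 - 2$ directly. Mohr's transformation, Lemma~\ref{lem:ksubtree}, and the bounded-depth case analysis above all run in $O(n)$ time, giving the claimed linear-time algorithm once a Hamilton cycle of $G$ is supplied.
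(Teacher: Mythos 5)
Your reduction to Lemma~\ref{lem:ksubtree} with $k=\frac{n}{2}-3$, $g=2$ (where $n:=|V(G)|$) is sound and matches the paper's opening moves: the arithmetic hypotheses collapse to $h\ge 4$, which $\delta\ge 4$ guarantees, and you correctly isolate the only problematic situation, namely $h=4$ (so $G$ is $4$-regular) with a face of length $\frac{n}{2}$ on each side of the Hamilton cycle and all remaining faces triangles. Up to that point you and the paper agree.

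The gap is in how you dispose of that extremal configuration, which is where essentially all of the paper's work lies. Your primary branch --- find a component of $T_{\textrm{int}}-f_1$ or $T_{\textrm{ext}}-f_2$ containing at least $\frac{n}{2}-4$ triangles --- provably fails on the square of a cycle $C_n^2$ (chords $v_{2i}v_{2i+2}$ drawn inside, $v_{2i+1}v_{2i+3}$ outside): there each inner triangle meets the Hamilton cycle in two edges and $f_1$ in one, so $T_{\textrm{int}}$ is a star centred at $f_1$ and every component of $T_{\textrm{int}}-f_1$ is a single vertex. Your fallback, that ``$3$-connectivity should permit an explicit shortening of $\partial f_1$ through a single triangle edge,'' is not an argument, and on $C_n^2$ itself it cannot work as stated: replacing a chord $v_{2i}v_{2i+2}$ of $\partial f_1$ by the path through $v_{2i+1}$ lengthens the cycle, and omitting a vertex of $\partial f_1$ leaves non-adjacent ends, so a cycle of length $\frac{n}{2}-1$ must be built differently (e.g.\ as a zigzag through $\frac{n}{2}-1$ consecutive vertices of the Hamilton cycle). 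The paper resolves this case by a roughly two-page planar analysis: it fixes an embedding maximizing the number of triangular faces containing two Hamilton-cycle edges, uses $3$-connectivity to exclude $0$- and $1$-triangles, concludes that $G$ is exactly the square of a cycle, and then invokes pancyclicity of $C_n^2$. Some structural classification of this kind, or at least a construction of a short cycle valid in \emph{every} such configuration, is required; your sketch does not supply it.
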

\begin{proof}
	We adopt the notations defined in Section~\ref{sec:Mohr}. If every face of $G_{\textrm{int}}$ is of length either $|V(G)|$ or less than $\frac12|V(G)|$, i.e. $c(v) \leq \frac12|V(G)| - 3$ for every $v \in V(T_{\textrm{int}})$, by Lemma~\ref{lem:ksubtree} (set $g := 2$ and $h := 4$), there exists a subtree of weight either $\frac12|V(G)| - 4$ or $\frac12|V(G)| - 3$ in $T_{\textrm{int}}$ and hence a cycle of length either $\frac12|V(G)| - 2$ or $\frac12|V(G)| - 1$ in $G$.
	
	Recall that $|E(G_{\textrm{int}})| \geq \frac32|V(G)|$. If $|E(G_{\textrm{int}})| > \frac32|V(G)|$, then $|V(T_{\textrm{int}})| \geq \frac12|V(G)| + 2 = \frac12c(T_{\textrm{int}}) + 3$ and $c(v) \leq c(T_{\textrm{int}}) - |V(T_{\textrm{int}})| + 1 \leq \frac12|V(G)| - 3$ for all $v \in V(T_{\textrm{int}})$. By Lemma~\ref{lem:ksubtree} (set $g := 1$ and $h := 6$), there exists a subtree of weight $\frac12|V(G)| - 3$ in $T_{\textrm{int}}$ and hence a cycle of length $\frac12|V(G)| - 1$ in $G$.
	
	Now we can assume that $|E(G_{\textrm{int}})| = \frac32|V(G)|$ and $G_{\textrm{int}}$ has a face of length $\frac12|V(G)|$. It holds immediately that $|E(G_{\textrm{ext}})| = \frac32|V(G)|$ since $|E(G_{\textrm{int}})| + |E(G_{\textrm{ext}})| = |E(G)| + |V(G)| \geq 3|V(G)|$ and $|E(G_{\textrm{int}})| \geq |E(G_{\textrm{ext}})|$. And we can also assume that $G_{\textrm{ext}}$ has a face of length $\frac12|V(G)|$. In this case we have $d_G(v) = 4$ and $d_{G_{\textrm{int}}}(v) + d_{G_{\textrm{ext}}}(v) = 6$ for every $v \in V(G)$, and that there are exactly one face of length $|V(G)|$, one face of length $\frac12|V(G)|$ and $\frac12|V(G)|$ faces of length $3$ in each of $G_{\textrm{int}}$ and $G_{\textrm{ext}}$. We denote by $F_{\textrm{int}}$ and $F_{\textrm{ext}}$ be the faces of length $\frac12|V(G)|$ in $G_{\textrm{int}}$ and $G_{\textrm{ext}}$, respectively.
	
	We claim that $G$ is the square of a cycle of length $|V(G)|$, which is obtained from a cycle of length $|V(G)|$ by adding edges for every pair of vertices having distance $2$ (see Figrue~\ref{fig:squareC16}). It is obvious that the square of a cycle is pancyclic. We call a face of length $3$ an \emph{$i$-triangle} ($i = 0, 1, 2$) if it contains exactly $i$ edges of the Hamilton cycle $C$. We assume that the plane graph $G$ has the maximum number of $2$-triangles over all of its planar embeddings. Let the Hamilton cycle $C$ of $G$ be $v_0 v_1 v_2 \dots v_{|V(G)| - 1} v_{0}$ (indices modulo $|V(G)|$).

	\begin{figure}[b!] 
		\centering
		\includegraphics[scale = 1]{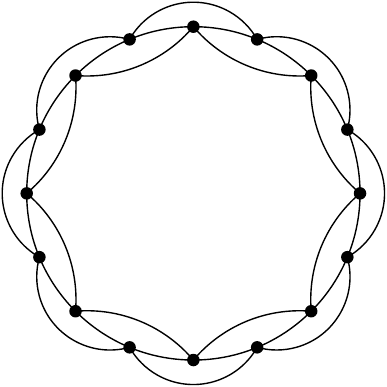}
		\caption{The square of a cycle of length $16$.}
		\label{fig:squareC16}
	\end{figure}
	
	Suppose there is a $0$-triangle $v_0 v_i v_j v_0$ in $G$, say it is also in $G_{\textrm{int}}$, for some $0 < i - 1 < j - 2 < |V(G)| - 3$. If $i > 2$, then the face in $G_{\textrm{int}}$ containing the path $v_1 v_0 v_i v_{i - 1}$ is of length larger than $3$ and smaller than $|V(G)|$. As there is exactly one such face in $G_{\textrm{int}}$, namely $F_{\textrm{int}}$, we can assume that $i = 2$ and $j = 4$. Then $d_{G_{\textrm{int}}}(v_1) = d_{G_{\textrm{int}}}(v_3) = 2$ and $d_{G_{\textrm{ext}}}(v_1) = d_{G_{\textrm{ext}}}(v_3) = 4$. Let $v_{i_1}, v_{i_2}$ be the neighbors of $v_1$ other than $v_0, v_2$, and $v_{i_3}, v_{i_4}$ be the neighbors of $v_3$ other than $v_2, v_4$, for some $2 < i_1 < i_2 < |V(G)|$ and $4 < i_3 < i_4 < |V(G)| + 2$.
	
	If $v_1$ is adjacent to $v_3$ in $G_{\textrm{ext}}$, i.e. $i_1 = 3$ and $i_4 = |V(G)| + 1$, and the face in $G_{\textrm{ext}}$ containing $v_{i_2} v_1 v_3 v_{i_3}$ is a face of length $3$, i.e. $i_2 = i_3$, then it must be a $0$-triangle. We can assume that $i_2 = i_3 = 5$. Clearly, $\{v_0, v_5\}$ is a separator of $G$, which contradicts that $G$ is $3$-connected. If $v_1$ is adjacent to $v_3$ in $G_{\textrm{ext}}$, but the face in $G_{\textrm{ext}}$ containing $v_{i_2} v_1 v_3 v_{i_3}$ is a face of length larger than $3$, then the faces in $G_{\textrm{ext}}$ containing $v_0 v_1 v_{i_2}$ and $v_{i_3} v_3 v_4$ must be $2$-triangles and $\{v_{i_2}, v_{i_3}\} = \{v_{-1}, v_5\}$ is a separator of $G$, contradiction.
	
	If $v_1$ is not adjacent to $v_3$, then the face in $G_{\textrm{ext}}$ containing $v_{i_1} v_1 v_2 v_3 v_{i_4}$ is of length larger than $3$, and hence $v_{-1} v_0 v_1 v_{-1}$ and $v_3 v_4 v_5 v_3$ must be $2$-triangles in $G_{\textrm{ext}}$. In this case we can swap $v_0$ and $v_1$ and swap $v_3$ and $v_4$ to obtain a planar embedding with more $2$-triangles (see Figure~\subref*{subfig:0triangleswap}), which contradicts the maximality of the number of $2$-triangles. Thus there is no $0$-triangle in the plane graph $G$.
	
	Suppose there is a $1$-triangle $v_0 v_1 v_i v_0$ in $G$, say also in $G_{\textrm{int}}$, for some $2 < i < |V(G)| - 1$. It is not hard to see that we can assume that the face in $G_{\textrm{int}}$ containing $v_0 v_i v_{i + 1}$ is $F_{\textrm{int}}$. Under this assumption we must have a sequence of $i - 1$ faces of length 3 such that all faces are $1$-triangles except the last one which is a $2$-triangle, namely $v_0 v_1 v_i v_0, v_1 v_{i - 1} v_i v_1, v_1 v_2 v_{i - 1} v_1, \dots, v_{\lceil \frac{i}{2} \rceil - 1} v_{\lceil \frac{i}{2} \rceil} v_{\lceil \frac{i}{2} \rceil+ 1} v_{\lceil \frac{i}{2} \rceil - 1}$.
	
	We claim that $i \leq 4$. Suppose $i > 4$, we prove the claim for odd $i$, it can be proved for even $i$ in a similar way. It is clear that $d_{G_{\textrm{ext}}}(v_{\lceil \frac{i}{2} \rceil - 3}) \leq 3$, $d_{G_{\textrm{ext}}}(v_{\lceil \frac{i}{2} \rceil - 1}) = 3$ and $d_{G_{\textrm{ext}}}(v_{\lceil \frac{i}{2} \rceil}) = 4$. Let $v_{i_1}, v_{i_2}$ be the neighbors of $v_{\lceil \frac{i}{2} \rceil}$ other than $v_{\lceil \frac{i}{2} \rceil - 1}, v_{\lceil \frac{i}{2} \rceil + 1}$, and $v_{i_3}$ be the neighbor of $v_{\lceil \frac{i}{2} \rceil - 1}$ other than $v_{\lceil \frac{i}{2} \rceil - 2}, v_{\lceil \frac{i}{2} \rceil}, v_{\lceil \frac{i}{2} \rceil + 1}$, for some $i < i_1 < i_2 \leq i_3 \leq |V(G)|$. Note that the face in $G_{\textrm{ext}}$ containing $v_{i_1} v_{\lceil \frac{i}{2} \rceil} v_{\lceil \frac{i}{2} \rceil + 1} v_{\lceil \frac{i}{2} \rceil + 2}$ is of length larger than $3$. Therefore the face in $G_{\textrm{ext}}$ containing $v_{i_3} v_{\lceil \frac{i}{2} \rceil - 1} v_{\lceil \frac{i}{2} \rceil} v_{i_2}$ and that containing $v_{\lceil \frac{i}{2} \rceil - 3} v_{\lceil \frac{i}{2} \rceil - 2} v_{\lceil \frac{i}{2} \rceil - 1} v_{i_3}$ must be of length $3$. It implies that $v_{\lceil \frac{i}{2} \rceil - 3} = v_{i_3} = v_{i_2}$ and $d_{G_{\textrm{ext}}}(v_{\lceil \frac{i}{2} \rceil - 3}) \geq 4$, contradiction.
	
	Now we consider the case when $i = 4$. It is clear that $d_{G_{\textrm{ext}}}(v_2) = 4$ and $d_{G_{\textrm{ext}}}(v_3) = 3$. Let $v_{i_1}$ be the neigbor of $v_3$ other than $v_1, v_2, v_4$, and $v_{i_2}, v_{i_3}$ be the neigbors of $v_2$ other than $v_1, v_3$, for some $4 < i_1 \leq i_2 < i_3 \leq |V(G)|$. If the face in $G_{\textrm{ext}}$ containing $v_{i_1} v_3 v_4$ is of length larger than $3$, then $i_1 = i_2 = |V(G)| - 1$, $i_3 = |V(G)|$ and $\{v_{-1}, v_4\}$ is separator of $G$. If the face in $G_{\textrm{ext}}$ containing $v_{i_1} v_3 v_4$ is of length $3$ but that containing $v_{i_1} v_3 v_2 v_{i_2}$ is of length larger than $3$, then $i_1 = 5$, $i_2 = |V(G)| - 1$, $i_3 = |V(G)|$ and $\{v_{-1}, v_5\}$ is a separator of $G$. If the faces in $G_{\textrm{ext}}$ containing $v_{i_1} v_3 v_4$ and $v_{i_1} v_3 v_2 v_{i_2}$ are of length $3$ but that containing $v_{i_2} v_2 v_{i_3}$ is of length larger than $3$, then $i_1 = i_2 = 5$, $i_3 = |V(G)|$ and $\{v_0, v_5\}$ is a separator of $G$. If the faces in $G_{\textrm{ext}}$ containing $v_{i_1} v_3 v_4$, $v_{i_1} v_3 v_2 v_{i_2}$ and $v_{i_2} v_2 v_{i_3}$ are of length $3$, then $i_1 = i_2 = 5$, $i_3 = 6$ and $\{v_0, v_6\}$ is a separator of $G$. In any case it contradicts that $G$ is $3$-connected.
	
	Finally, we consider the case when $i = 3$. It is clear that $d_{G_{\textrm{ext}}}(v_1) = 3$ and $d_{G_{\textrm{ext}}}(v_2) = 4$. Let $v_{i_1}, v_{i_2}$ be the neigbors of $v_2$ other than $v_1, v_3$, and $v_{i_3}$ be the neigbor of $v_1$ other than $v_0, v_2, v_3$, for some $3 < i_1 < i_2 \leq i_3 < |V(G)|$. If the face in $G_{\textrm{ext}}$ containing $v_{i_1} v_2 v_3$ is of length larger than $3$, then $i_1 = |V(G)| - 2$ and $i_2 = i_3 = |V(G)| - 1$, which has been shown to be not possible. If the face in $G_{\textrm{ext}}$ containing $v_{i_1} v_2 v_3$ is of length $3$ but that containing $v_{i_1} v_2 v_{i_2}$ is of length larger than $3$, then $i_1 = 4$, $i_2 = i_3 = |V(G)| - 1$ and $d_{G_{\textrm{int}}}(v_0) = 4$. Let $v_{i_4}$ be the neighbor of $v_0$ other than $v_{-1}, v_1, v_3$ for some $3 < i_4 \leq |V(G)| - 2$. If the faces in $G_{\textrm{int}}$ containing $v_{-1} v_0 v_{i_4}$ is of length larger than $3$, then $i_4 = 4$, which has been shown to be not possible. Hence $v_{-1} v_0 v_{i_4} v_{-1}$ is $2$-triangle, $i_4 = |V(G)| - 2$ and $\{v_{-2}, v_4\}$ is a separator of $G$, which is not possible. If the faces in $G_{\textrm{ext}}$ containing $v_{i_1} v_2 v_3$ and $v_{i_1} v_2 v_{i_2}$ are of length $3$, then $i_1 = 4$ and $i_2 = 5$. Swapping $v_2$ and $v_3$ yields a planar embedding of more $2$-triangles (see Figure~\subref*{subfig:1triangleswap}), which contradicts the maximality of the number of $2$-triangles. Hence we can conclude that there is no $1$-triangle in the plane graph $G$.
	
	It is clear that $G$ is the square of a cycle of length $|V(G)|$ if it has a planar embedding with neither $0$- nor $1$-triangle. To find a cycle of the desired length, one can apply Algorithm~\ref{alg:overloaddischarge} for $T_{\textrm{int}}$ if $|E(G_{\textrm{int}})| > \frac32 |V(G)|$, or if there is no face of length $\frac12 |V(G)|$ in $G_{\textrm{int}}$ and $G_{\textrm{ext}}$, otherwise, do swaps of some vertex pairs at most once for each face to obtain a planar embedding of the square of a cycle of length $|V(G)|$ with neither $0$- nor $1$-triangle, then a cycle of length $\frac12 |V(G)|$ can be easily found in such planar embedding in linear time.
\end{proof}

\begin{figure}[!ht]
	\centering
	\subfloat[Swap $v_0, v_1$, and $v_3, v_4$. \label{subfig:0triangleswap}]{%
		\includegraphics[scale=1.2]{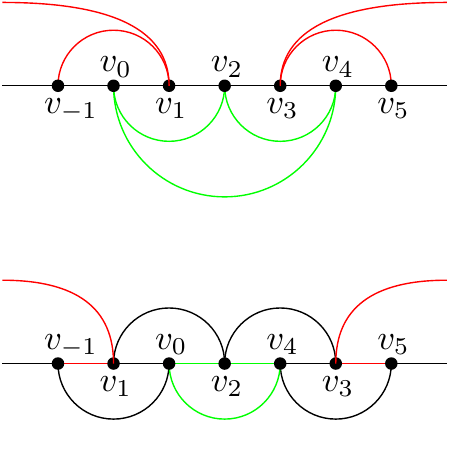}
	}
	\hspace{1.8cm}
	\subfloat[Swap $v_2, v_3$.
	\label{subfig:1triangleswap}]{%
		\includegraphics[scale=1.2]{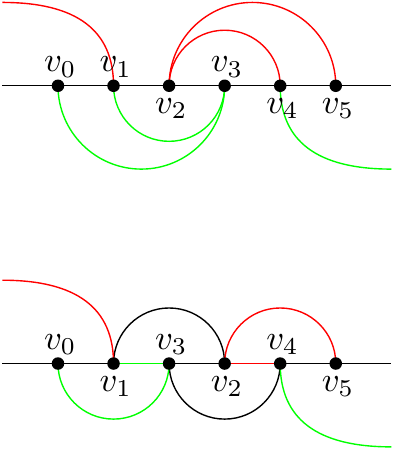}
	}
	\caption{Swap vertices to obtain planar embedding of more $2$-triangles. Edges in $C$, $C_{\textrm{int}}$ and $C_{\textrm{ext}}$ are indicated as black, green and red, respectively.}
	\label{fig:triangleswap}
\end{figure}

\appendix

\section{Find Subsets of Specified Sum via Weighted Subtrees} \label{sec:number}

We here consider two classic $\mathsf{NP}$-complete problems, \textsc{SubsetSum} and \textsc{Partition}, and introduce a new one, \textsc{SubtreeSum}. Let $A := \{a_1, \dots, a_{N}\}$ be a multiset of $N$ positive integers. Given a target value $k \in \mathbb{N}$, $\textsc{SubsetSum}(A, k)$ asks whether there exists $B \subseteq A$ with $\sum B = k$, where $\sum B := \sum_{a \in B} a$. $\textsc{Partition}(A)$ is defined as $\textsc{SubsetSum}(A, \sum A/2)$, assuming $\sum A$ is even. Given a tree $T$ with vertex weights $c: V(T) \rightarrow \mathbb{N}$, and $k \in \mathbb{N}$, $\textsc{SubtreeSum}(T, c, k)$ asks whether there exists a subtree $S$ of $T$ with $c(S) = k$.

Note that \textsc{Partition} is a subproblem of \textsc{SubsetSum}, and \textsc{SubsetSum} can be linear-time reduced to \textsc{SubtreeSum}. To solve $\textsc{SubsetSum}(A, k)$, we can take $T$ to be the \emph{star} with $V(T) = \{v_1, \dots, v_{N}\}$ and assign the vertex weights $c(v_i) := a_i$ for $i = 1, \dots, N$, then $\textsc{SubsetSum}(A, k)$ is true if and only if $\textsc{SubtreeSum}(T, c, k)$ or $\textsc{SubtreeSum}(T, c, \sum A - k)$ is true. In particular, \textsc{SubtreeSum} is $\mathsf{NP}$-complete.

We consider the case that $\sum A \leq 2N - 2$. Set $T$ be a path with $|V(T)| = N =: N_1$ with vertex weights $c$ assigned as above, $N_2 := \sum A$, $g := 1$, $h := 2N - \sum A$. Lemma~\ref{lem:ksubtree} implies the following.

\begin{theorem}
	Let $A := \{a_1, \dots, a_{N}\}$ be a multiset. If $\sum A \leq 2N - 2$ and $\sum A - N + 1 \leq k \leq N$, then $\textsc{SubsetSum}(A, k)$ is true if $a \leq k$ for every $a \in A$. A solution can be found in $O(N)$ time if one exists.
\end{theorem}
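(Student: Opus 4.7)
The plan is to derive this as a direct corollary of Lemma~\ref{lem:ksubtree} by realising the multiset $A$ as the vertex weights of a path. Two observations make the reduction tight: a subtree of a path is again a sub-path, so its vertex set corresponds to a sub-multiset of $A$; and taking $g := 1$ in Lemma~\ref{lem:ksubtree} collapses the target interval $[k-g+1, k]$ to the single value $k$, which is exactly what \textsc{SubsetSum} requires.

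Concretely, I would let $T$ be the path $v_1 v_2 \dots v_N$, set $c(v_i) := a_i$, and instantiate the parameters of Lemma~\ref{lem:ksubtree} as $N_1 := N$, $N_2 := \sum A$, $g := 1$, and $h := 2 N_1 - N_2 = 2N - \sum A$. It then remains to check the hypotheses. Since every $a_i \geq 1$ we have $N \leq \sum A$, so $1 \leq \sum A - N + 1 \leq k \leq N \leq N_2$. The assumption $\sum A \leq 2N - 2$ gives $h \geq 2$, hence $g + h \geq 3 > 2$. Substituting the parameters, the condition $2k - 4g - h + 3 \leq N_2$ simplifies to $k \leq N + \tfrac12$ (i.e.\ $k \leq N$ for integer $k$), and $N_2 \leq 2k + g + h - 2$ simplifies to $k \geq \sum A - N + \tfrac12$ (i.e.\ $k \geq \sum A - N + 1$); both are precisely the standing hypotheses on $k$. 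Finally, $c(v_i) = a_i \leq k$ is assumed.

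With every hypothesis verified, Lemma~\ref{lem:ksubtree} furnishes, in $O(N_1) = O(N)$ time, a subtree $S$ of $T$ with $c(S) = k$. Because $T$ is a path, $S$ is a sub-path, and the multiset $B := \{c(v) : v \in V(S)\}$ is a sub-multiset of $A$ satisfying $\sum B = k$, which certifies $\textsc{SubsetSum}(A, k)$ and simultaneously exhibits the desired linear-time algorithm. I do not anticipate any genuine obstacle: the entire argument is the arithmetic translation of the two two-sided bounds on $N_2$ in Lemma~\ref{lem:ksubtree} into the two two-sided bounds on $k$ in the theorem, via the substitutions $g = 1$ and $h = 2N - \sum A$. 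I would present these two equivalences side by side and conclude.
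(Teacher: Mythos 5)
Your proposal is correct and coincides with the paper's own argument, which likewise realises $A$ as the vertex weights of a path and applies Lemma~\ref{lem:ksubtree} with $N_1 := N$, $N_2 := \sum A$, $g := 1$ and $h := 2N - \sum A$. The arithmetic verification of the hypotheses is exactly the intended (and only) content of the proof.
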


\begin{corollary}
	Let $A := \{a_1, \dots, a_{N}\}$ be a multiset with $\sum A$ even. If $N \geq \sum A/2 + 1$, then $\textsc{Partition}(A)$ is true if and only if $a \leq \sum A/2$ for every $a \in A$. A solution can be found in $O(N)$ time if one exists.
\end{corollary}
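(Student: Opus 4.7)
The plan is to derive the corollary directly from the preceding theorem with the specific choice $k := \sum A / 2$, and to dispose of the converse direction by an elementary counting argument.

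For the ``only if'' direction, suppose some element $a \in A$ satisfies $a > \sum A / 2$. Any subset $B \subseteq A$ containing $a$ then has $\sum B \geq a > \sum A / 2$, while any subset $B \subseteq A$ not containing $a$ satisfies $\sum B \leq \sum A - a < \sum A / 2$. Hence no subset of $A$ sums to $\sum A / 2$, so $\textsc{Partition}(A)$ is false. This is the short direction and presents no real difficulty.

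For the ``if'' direction, I would verify that the hypotheses of the preceding theorem hold for the choice $k := \sum A / 2$. The hypothesis $\sum A \leq 2N - 2$ is just a restatement of $N \geq \sum A/2 + 1$. The hypothesis $\sum A - N + 1 \leq k \leq N$ unfolds to $\sum A - N + 1 \leq \sum A / 2$ and $\sum A / 2 \leq N$; the first is equivalent to $\sum A / 2 \leq N - 1$, which is again our standing assumption, and the second follows immediately from it. Finally, the bound $a \leq k$ for all $a \in A$ is by hypothesis. The preceding theorem therefore furnishes a subset $B \subseteq A$ with $\sum B = \sum A / 2$ (and hence $\sum (A \setminus B) = \sum A / 2$ as well), computable in $O(N)$ time. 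Such a $B$ is a valid partition, which completes the proof.

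I do not anticipate a genuine obstacle: the whole content lies in checking that the parameter choice $k := \sum A / 2$ satisfies the numerical range imposed by Lemma~\ref{lem:ksubtree} (equivalently, by the preceding theorem), and once this is observed the corollary is immediate. The only minor care needed is to notice that the two inequalities $\sum A - N + 1 \leq k$ and $\sum A \leq 2N - 2$ collapse to the single stated hypothesis $N \geq \sum A / 2 + 1$, so no stronger assumption on $N$ is required.
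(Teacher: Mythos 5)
Your proof is correct and follows exactly the route the paper intends: the corollary is stated there as an immediate consequence of the preceding theorem, obtained by the instantiation $k := \sum A/2$, with the converse direction handled by the same elementary observation that an element exceeding $\sum A/2$ blocks any balanced split. Your verification that the hypotheses $\sum A \leq 2N-2$ and $\sum A - N + 1 \leq k \leq N$ all collapse to the single condition $N \geq \sum A/2 + 1$ is accurate and fills in the only detail the paper leaves implicit.
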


The criterion $N \geq \sum A/2 + 1$ is tight, since $\textsc{Partition}(A)$ is false for the instance $A = \{2, \dots, 2\}$ with $N = \sum A/2 \geq 2$ odd.

Given an instance $(A, k)$ of \textsc{SubsetSum} with $k \leq \sum A/2$, a standard trick tells us that $\textsc{SubsetSum}(A, k)$ is true if and only if $\textsc{Partition}(A \cup \{\sum A - 2k\})$ is true. It yields the following criterion, which is also tight.

\begin{corollary}
	Let $A := \{a_1, \dots, a_{N}\}$ be a multiset and $k \leq \sum A/2$. If $N \geq \sum A - k$, then $\textsc{SubsetSum}(A, k)$ is true if and only if $a \leq \sum A - k$ for every $a \in A$. A solution can be found in $O(N)$ time if one exists.
\end{corollary}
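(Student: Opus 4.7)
The plan is to deduce this corollary from the preceding \textsc{Partition} corollary via the reduction recalled immediately above the statement. First I would dispatch the trivial ``only if'' direction directly: if $B \subseteq A$ satisfies $\sum B = k$, then every $a \in A$ lies either in $B$ (so $a \leq k$) or in $A \setminus B$ (so $a \leq \sum A - k$); since $k \leq \sum A / 2 \leq \sum A - k$, either way $a \leq \sum A - k$, as required.

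The ``if'' direction is the substantive part. Assume $a \leq \sum A - k$ for all $a \in A$ and $N \geq \sum A - k$. Following the standard trick stated just above the corollary, I would form the multiset $A' := A \cup \{\sum A - 2k\}$ and verify that the hypotheses of the preceding \textsc{Partition} corollary are met for $A'$. Concretely: $\sum A' = 2(\sum A - k)$ is even and $\sum A'/2 = \sum A - k$; the cardinality condition $|A'| \geq \sum A'/2 + 1$ becomes $N + 1 \geq \sum A - k + 1$, which is exactly the hypothesis $N \geq \sum A - k$; and every element of $A'$ is at most $\sum A'/2 = \sum A - k$ (elements of $A$ by hypothesis, and the added element $\sum A - 2k$ because $k \geq 0$). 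The \textsc{Partition} corollary then produces $B' \subseteq A'$ with $\sum B' = \sum A - k$, and I would translate this back to a solution of $\textsc{SubsetSum}(A, k)$ by a two-line case split: if $\sum A - 2k \in B'$, set $B := B' \setminus \{\sum A - 2k\}$, which gives $\sum B = (\sum A - k) - (\sum A - 2k) = k$; otherwise $B' \subseteq A$ and $B := A \setminus B'$ satisfies $\sum B = \sum A - (\sum A - k) = k$.

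The only mild obstacle is the boundary case $k = \sum A/2$, where the added element $\sum A - 2k$ is zero and is not a positive integer. I would handle this as a separate first step: when $k = \sum A/2$, the problem $\textsc{SubsetSum}(A, k)$ coincides with $\textsc{Partition}(A)$, and the conclusion follows directly from the preceding corollary applied to $A$. Outside this edge case the reduction above is clean.

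For the algorithmic claim, constructing $A'$ from $(A, k)$ is clearly $O(N)$, the preceding \textsc{Partition} corollary produces $B'$ in $O(N)$ time, and the final back-translation of $B'$ to $B$ is $O(N)$; hence a solution of $\textsc{SubsetSum}(A, k)$ is obtained in $O(N)$ time overall whenever one exists.
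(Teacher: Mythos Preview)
Your argument for the generic case $k < \sum A/2$ is correct and is exactly the reduction the paper intends: adjoin the positive integer $\sum A - 2k$ to $A$, check that the enlarged multiset $A'$ meets the hypotheses of the \textsc{Partition} corollary, and translate the resulting partition back to a subset of $A$ of sum $k$. Your verification of the cardinality bound $|A'| = N+1 \geq (\sum A - k) + 1 = \sum A'/2 + 1$ and your case split on whether the new element lies in $B'$ are both clean.

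The gap is in your treatment of the boundary $k = \sum A/2$. You say the conclusion there ``follows directly from the preceding corollary applied to $A$'', but the \textsc{Partition} corollary demands $N \geq \sum A/2 + 1$, whereas the present hypothesis $N \geq \sum A - k$ only yields $N \geq \sum A/2$. That missing $+1$ cannot be recovered: with $A = \{2,2,2\}$ and $k = 3$ one has $N = 3 = \sum A - k$ and every element is at most $\sum A - k = 3$, yet no subset of $A$ sums to $3$. So this is not merely a gap in your write-up; the corollary as stated actually fails on this boundary. The clean fix is to require $k < \sum A/2$ (which makes the added element a genuine positive integer and lets your main reduction cover all cases), or equivalently to note that when $k = \sum A/2$ the problem is literally \textsc{Partition}$(A)$ and the correct cardinality threshold is the one from the previous corollary.
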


\section*{Acknowledgments}

The author is very thankful to Samuel Mohr for motivating the problem and sharing his elegant idea; to Tom\'{a}\v{s} Madaras for a comprehensive survey on cycle spectra; and to Matthias Kriesell and Jens M. Schmidt for helpful discussions and comments.

\bibliographystyle{abbrv}
\bibliography{paper}

\end{document}